\newcommand{\fourierabs}[1]{\lfloor #1 \rfloor}
\newcommand{\vangle}{\measuredangle}
\newcommand{\embeds}{\hookrightarrow} % embedding
\newcommand{\stFT}{\,\, \widetilde{} \,\,}
\newcommand{\init}{\vert_{t = 0}}
\newcommand{\abs}[1]{\left\vert #1 \right\vert}
\newcommand{\fixedabs}[1]{\vert #1 \vert}
\newcommand{\bigabs}[1]{\bigl\vert #1 \bigr\vert}
\newcommand{\norm}[1]{\left\Vert #1 \right\Vert}
\newcommand{\bignorm}[1]{\bigl\Vert #1 \bigr\Vert}
\newcommand{\C}{\mathbb{C}}
\newcommand{\R}{\mathbb{R}}
\newcommand{\innerprod}[2]{\left\langle \, #1 , #2 \, \right\rangle}
\newcommand{\angles}[1]{\langle #1 \rangle}
 \DeclareMathOperator{\im}{Im}
\newtheorem{theorem}{Theorem}
\newtheorem{lemma}{Lemma}
\theoremstyle{definition}
\theoremstyle{remark}
\title[DKG in three space dimensions]{Low Regularity local well-posedness
for the 1+3 dimensional Dirac-Klein-Gordon system}
\author[Achenef Tesfahun]{Achenef Tesfahun}
\thanks{Supported by the Research Council of Norway, project no.\
160192/V30, PDE and Harmonic Analysis. The author would like to
thank Sigmund Selberg for continuous support, encouragement and
advice while writing this paper.}
\subjclass[2000]{35Q40; 35L70}
\begin{document}

\maketitle

\begin{abstract}
We prove that the Cauchy problem for the Dirac-Klein-Gordon system
of equations in 1+3 dimensions is locally well-posed in a range of
Sobolev spaces for the Dirac spinor and the meson field. The result
contains and extends the earlier known results for the same problem.
Our proof relies on the null structure in the system, and bilinear
spacetime estimates of Klainerman-Machedon type.
\end{abstract}

\section{Introduction} We consider the Dirac-Klein-Gordon system (DKG) in
three space dimensions,
\begin{equation}\label{DKG1}
\left\{
\begin{aligned}
   \bigl( D_t + \alpha\cdot D_x \bigr) \psi = \phi \beta \psi,
  \quad \quad &( D_t=-i\partial_t, \ D_x=-i\nabla )
  \\
  \square \phi = - \innerprod{\beta \psi}{\psi}, \quad \quad & (  \square = -\partial_t^2 + \Delta)
\end{aligned}
\right.
\end{equation}
with initial data
\begin{equation}\label{data}
   \psi \init = \psi_0 \in H^s, \qquad \phi \init =
\phi_0 \in H^r, \qquad
\partial_t \phi \init = \phi_1 \in H^{r-1},
\end{equation}
where $\psi(t,x)$ is the Dirac spinor, regarded as a column vector
in $\C^4$, and $\phi(t,x)$ is the meson field which is real-valued;
both the Dirac spinor and  the meson field are defined for $t\in \R,
\ x\in \R^{3}$; $M,m \ge 0$ are constants; $\nabla=(\partial_{x_1},
\partial_{x_2},
\partial_{x_3})$; $ \innerprod{u}{v} := \innerprod{u}{v}_{\C^4}=v^\dagger u$ for column
vectors $u, v \in \C^4$, where $v^\dagger$ is the complex conjugate
transpose of $v$;  $ H^s=(1+\sqrt{-\Delta})^{-s}L^2(\R^3)$ is the
standard Sobolev space of order $s$. The Dirac matrices are given in
$2 \times 2$ block form by
$$
   \beta =    \begin{pmatrix}
     I & 0  \\
     0 & -I
   \end{pmatrix},
   \qquad
   \alpha^j =  \begin{pmatrix}
     0 & \sigma^j  \\
     \sigma^j & 0
   \end{pmatrix},
$$
where
$$
   \sigma^1 =    \begin{pmatrix}
     0 & 1  \\
     1 &0
   \end{pmatrix},
   \qquad
   \sigma^2 =  \begin{pmatrix}
     0 & -i  \\
     i & 0
   \end{pmatrix},
   \qquad
   \sigma^3 =    \begin{pmatrix}
     1 & 0  \\
     0 & -1
   \end{pmatrix}
$$
are the Pauli matrices. The Dirac matrices $\alpha^j, \beta$ satisfy
\begin{equation}
  \label{DiracIdentity1}
  \beta^\dagger=\beta, \quad (\alpha^j)^\dagger = \alpha^j,
  \quad \beta^2 = (\alpha^j)^2 = I, \quad \alpha^j \beta + \beta \alpha^j
= 0.
  \end{equation}

 For the DKG system there are many conserved quantities which are not positive definite,
such as the energy, see ~\cite{gs79}.  However, there is a known
positive conserved quantity, namely the charge,
$\norm{\psi(t,.)}_{L^{2}}=\text{const}$. To study questions of
global regularity, a natural strategy is to study local (in time)
well-posedness (LWP) for low regularity data, and then try to
exploit the conserved quantities of the system. See, e.g., the
global result of Chadam ~\cite{c73} for 1+1 dimensional DKG system.
The LWP results for DKG in 1+3 dimensions are summarized in Table
\ref{Table1}\\

 For DKG in 1+3 dimensions the scale invariant
data is (see ~\cite{dfs2005})
 $$
  (\psi_0,\phi_0,\phi_1) \in L^2 \times \dot H^{1/2} \times \dot
  H^{-1/2},
$$  where $\dot
H^s=(\sqrt{-\Delta})^{-s}L^2$. Heuristically, one cannot expect
well-posedness below this regularity. This scaling also suggests
that $r=1/2+s$ is the line where equation \eqref{DKG1} is LWP.
Concerning LWP of the DKG system in 1+3 dimensions, the best result
to date is due to P. d'Ancona, D. Foschi and S. Selberg in
~\cite{dfs2005} for data $$ \psi_0 \in H^{\varepsilon}, \quad
\phi_0\in H^{1/2+\varepsilon}, \quad  \phi_1 \in
H^{-1/2+\varepsilon},$$ where $\varepsilon>0$ is arbitrary. This
result is arbitrarily close to the minimal regularity predicted by
the scaling ($\varepsilon=0$). The key achievement in this result is
that a null structure occurs not only in the Klein-Gordon part (in
the nonlinearity $ \innerprod{\beta \psi}{\psi} $) which was known
to be a null form (see ~\cite{dfs2005} for references)), but also in
the Dirac part (in the nonlinearity $\phi \beta \psi$) of the
system, which they discover using a duality argument. This requires
first to diagonalize the system by using the eigenspace projections
of the Dirac operator. The same authors used their result on the
null structure in $\phi \beta\psi$ to prove LWP below the charge
norm of the
DKG system in 1+2 dimensions (see ~\cite{dfs2006}). \\

In the present paper we study the LWP of the DKG system in 1+3
dimensions. We prove that \eqref{DKG1}--\eqref{data} is LWP for
$(s,r)$ in the convex region shown in Figure \ref{fig:1}, extending
to the right, which contains the union of all the results shown in
Table \ref{Table1} as a proper subset. In our proof, we take
advantage of the null structure in the nonlinearity $\phi\beta \psi$
found in ~\cite{dfs2005} besides the null structure in the
nonlinearity $\innerprod{\beta \psi}{\psi}$, and some bilinear spacetime estimates. \\

We now describe our main result.
\begin{theorem}\label{Mainthm}
Suppose $(s,r)\in \R^2$ belongs to the convex region described by
(see Figure \ref{fig:1}) the region
\begin{equation*}
  s>0,  \qquad
  \max\left(\frac{1}{2}+\frac{s}{3}, \frac{1}{3}+\frac{2s}{3},
  s\right)<r< \min \left(\frac{1}{2}+2s, 1+s\right).
\end{equation*}
Then the DKG system \eqref{DKG1} is LWP for data \eqref{data}.
Moreover, we can allow $r=1+s$ if $s>1/2$, and $r=s$ if $s>1$.
\end{theorem}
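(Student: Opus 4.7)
The plan is to recast \eqref{DKG1} as a fixed-point problem in Bourgain-type wave-Sobolev spaces and to reduce local well-posedness, via a standard Picard iteration, to a family of bilinear spacetime estimates that encode the null structure of both nonlinearities.

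The first step, following~\cite{dfs2005}, is to diagonalize the Dirac operator via the spectral projections $\Pi_\pm(D) = \tfrac12\bigl(I \pm |D|^{-1}\alpha\cdot D\bigr)$. Setting $\psi_\pm = \Pi_\pm\psi$ and $\psi = \psi_+ + \psi_-$ decouples the free evolution: each $\psi_\pm$ satisfies a half-wave equation
\[
(D_t \pm |D|)\psi_\pm \;=\; \Pi_\pm\bigl(\phi\,\beta\,\psi\bigr),
\]
while $\phi$ continues to solve $\square\phi = -\innerprod{\beta\psi}{\psi}$. I would seek $\psi_\pm$ in $X^{s,b}_{\tau=\mp|\xi|}$ and $\phi$ in the wave-Sobolev space $X^{r,b}_{\tau=\pm|\xi|}$ for a suitably chosen $b>\tfrac12$. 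After Duhamel's formula and a standard time-cutoff, the iteration closes provided one establishes, schematically and for every choice of signs,
\begin{align*}
\bignorm{\Pi_{\pm_0}\bigl(\phi\,\beta\,\psi_{\pm_1}\bigr)}_{X^{s,b-1}_{\tau=\mp_0|\xi|}} &\lesssim \norm{\phi}_{X^{r,b}_{\tau=\pm|\xi|}}\,\norm{\psi_{\pm_1}}_{X^{s,b}_{\tau=\mp_1|\xi|}},\\
\bignorm{\innerprod{\beta\psi_{\pm_1}}{\psi_{\pm_2}}}_{X^{r-1,b-1}_{\tau=\pm|\xi|}} &\lesssim \norm{\psi_{\pm_1}}_{X^{s,b}_{\tau=\mp_1|\xi|}}\,\norm{\psi_{\pm_2}}_{X^{s,b}_{\tau=\mp_2|\xi|}}.
\end{align*}

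The null structure in both nonlinearities is revealed by the symbolic identity
\[
\Pi_{\pm_1}(D)\,\beta\,\Pi_{\pm_2}(D') \;=\; O\bigl(\vangle(\pm_1 D,\,\mp_2 D')\bigr),
\]
which produces an angular gain in the bilinear symbols whenever the spatial frequencies of the two factors are aligned on the same half light-cone. The main work is then to combine this angular factor with hyperbolic weights and Klainerman--Machedon-type bilinear spacetime estimates for products of free waves supported on distinct characteristic cones. After dyadic Littlewood--Paley decomposition in frequency and modulation, the bilinear estimates split into a small number of standard regimes (high-high interactions producing low output, high-low interactions, and high-modulation interactions), and the three lower constraints $r > \tfrac12 + \tfrac{s}{3}$, $r > \tfrac13 + \tfrac{2s}{3}$, $r > s$, together with the upper bounds $r < \tfrac12 + 2s$ and $r < 1+s$, arise precisely from balancing the losses in these regimes against the angular gain. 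I expect the careful bookkeeping in this case analysis, and in particular the sharp choice of $b$, to be the principal obstacle.

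Finally, the two endpoint extensions $r = 1+s$ (requiring $s>\tfrac12$) and $r = s$ (requiring $s>1$) lie on the boundary where a key bilinear estimate just fails at $b = \tfrac12$. These limiting cases I would handle by strengthening $X^{s,1/2}$ to its $\ell^1$-Besov-in-modulation refinement; this preserves the embedding into $C_tH^s$, allows the borderline bilinear estimate to be summed, and yields local well-posedness on a small time interval depending continuously on the norms of the data.
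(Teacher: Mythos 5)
Your outline coincides with the paper's strategy: diagonalize with the projections $P_\pm(D_x)$, exploit the null structure of $\beta P_{\mp}P_{[\pm]}$, and close a Picard iteration in Bourgain-type spaces by reducing LWP to the two bilinear estimates \eqref{Bilinear-Dirac} and \eqref{Bilinear-KG} (whether one puts $\phi$ in $H^{r,\rho}$ or in half-wave $X^{r,b}$ spaces is immaterial). But the proposal stops exactly where the proof begins. The whole content of the theorem is the verification of those two estimates on the precise region $\max(1/2+s/3,\,1/3+2s/3,\,s)<r<\min(1/2+2s,\,1+s)$, and this is not bookkeeping that falls out of standard Klainerman--Machedon estimates after a dyadic decomposition: the paper needs the duality reformulation \eqref{Bilinear-DiracD}, a hierarchy of product laws for $H^{a,\alpha}$ (Theorems \ref{prodthm1}--\ref{speprodthm3}, the last obtained from the hyperbolic Leibniz rule plus a sharp free-wave estimate and not a consequence of the standard laws), the interpolated embeddings \eqref{bie1}--\eqref{bie9}, and, crucially, modulation exponents that are \emph{not} equal for the two unknowns and \emph{not} uniform in $(s,r)$: $\rho=1/2+\varepsilon$ for $\phi$ throughout, while the spinor exponent $\sigma$ must be tuned region by region as in \eqref{condsigma}, ranging up to $1-\varepsilon$ on the segment $BD$. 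Your ansatz of a single $b>\tfrac12$ for all unknowns ignores that the two estimates pull the exponents in opposite directions (strengthening the norm of $\phi$ helps \eqref{Bilinear-Dirac} but hurts \eqref{Bilinear-KG}, and similarly for the spinor weight), and you give no indication of how the exponents should be chosen to cover the claimed region; that choice, together with the estimates \eqref{bie1}--\eqref{bie9} needed to implement it, is the missing core of the argument, not a routine case analysis.

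The treatment of the boundary lines is also misdiagnosed. In the paper the cases $r=1+s$ with $s>1/2$ and $r=s$ with $s>1$ are not modulation endpoints where an estimate ``just fails at $b=1/2$'': they are handled inside the same $b>\tfrac12$ framework, using product laws that tolerate the relevant equalities in the spatial exponents (e.g.\ Theorem \ref{prodthm2} with $t_1+t_2+t_3=3/2$, $t_j\neq 3/2$) together with the region-dependent choice of $\sigma$. No $\ell^1$-Besov refinement in modulation is required, and invoking one does not by itself produce the borderline bilinear estimates you would still have to prove; so the final paragraph of your plan substitutes a speculative device for the actual work. In short, the skeleton is the right one, but the quantitative heart of the proof --- the product laws, the interpolation scheme, and the region-by-region choice of $(\sigma,\rho)$ that makes \eqref{Bilinear-DiracD} and \eqref{Bilinear-KG} hold simultaneously on the stated region including its admissible boundary pieces --- is absent.
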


\begin{figure}[h]
   \centering
   \includegraphics{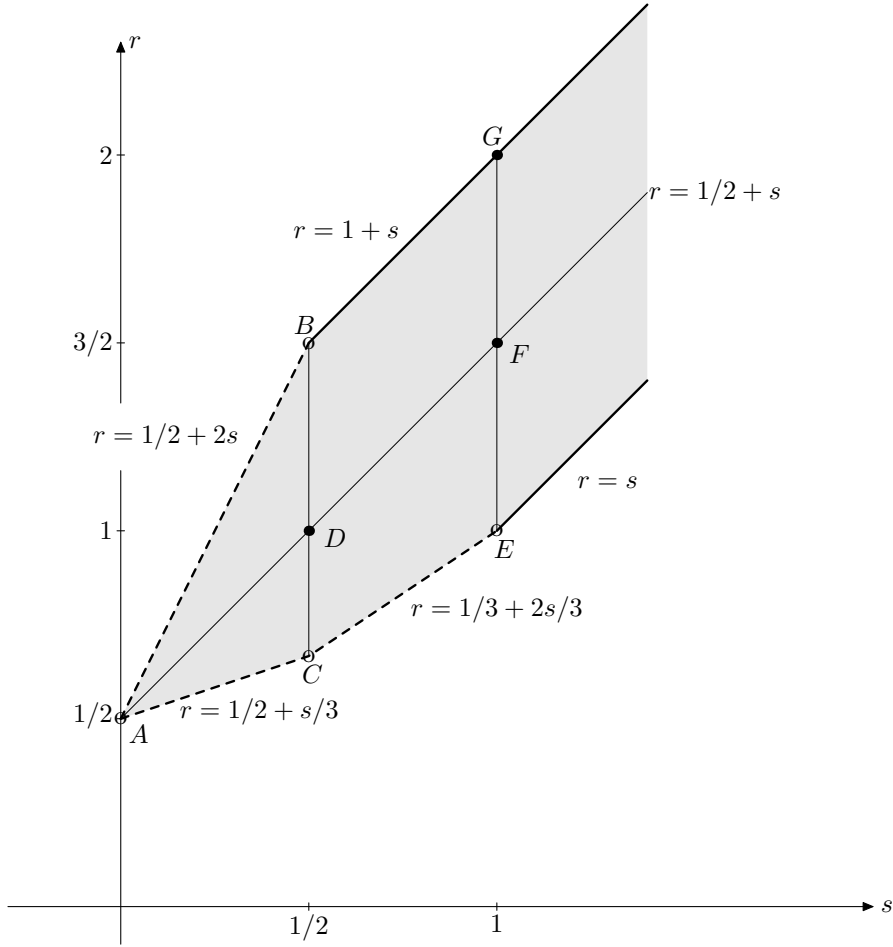}
   \caption{LWP holds in the interior of the shaded
region, extending to the right. Moreover, we can allow the line
$r=1+s$ for $s>1/2$, and the line $r=s$ for $s>1$. The line
$r=1/2+s$ represents the regularity predicted by the scaling.}
   \label{fig:1}
\end{figure}

 If $A, B, C, D$ are points in the $(s,r)$--plane, the symbol
 $AB$ represents a line from $A$ to $B$, $ABC$ represents a triangle and
$ABCD$ a quadrilateral,  all of them excluding the boundaries. We
use the following notation for different regions in Figure
\ref{fig:1}:
\begin{equation}\label{regions}
\begin{aligned}
&R_1:=ACD\cup AD,\\
& R_2:=ABD,\\
 &R_3:=D\cup F \cup CD \cup DF
\cup
FE \cup CDFE,\\
  &R_4:=G \cup BG\cup GF \cup BDGF,\\
& R:=BD \cup \bigcup_{j=1}^4 R_j .
\end{aligned}
\end{equation}

\begin{table}
\caption{LWP exponents for \eqref{DKG1}, \eqref{data}. That is, if
the data $(\psi_0, \phi_0, \phi_1)\in H^s \times H^r \times
H^{r-1}$, then there exists a time $T>0$ and a solution of
\eqref{DKG1}, $\bigl( \psi(t),\phi(t)\bigr) \in
C\bigl([0,T],H^s\bigr )\times C\bigl([0,T], H^r\bigr)$ which depends
continuously on the data. The solution is also unique in some
subspace of $C\bigl([0,T],H^s\bigl )\times C\bigl([0,T], H^r\bigr)$.
Here $\varepsilon>0$ is an arbitrary parameter.} \label{Table1}
\def\arraystretch{1.3}
\begin{center}
\begin{tabular}{|c|c|c|c|c|c|}
   \hline
     & $s$ & $r$
   \\
   \hline
   \hline
   classical methods & $1+\varepsilon$ & $3/2+\varepsilon$
   \\
   \hline
   Bachelot ~\cite{ab84}, 1984 & $1$ & $3/2$
   \\
   \hline
Strichartz estimate ~\cite{nb99, pt93},  1993 & $1/2+\varepsilon$ &
$1+\varepsilon$
   \\
   \hline
   Beals and Bezard ~\cite{bb96}, 1996 & $1$ & $2$
   \\
   \hline
Bournaveas ~\cite{nb99}, 1999 & $1/2$ & $1$
      \\
\hline
   Fang and Grillakis ~\cite{fg2005}, 2005 & $(1/4,1/2]$ & $1$
   \\
   \hline
D'Ancona, Foschi and Selberg ~\cite{dfs2005}, 2005 & $\varepsilon$ &
$1/2+\varepsilon$
\\
 \hline
\end{tabular}
\end{center}
\end{table}

\medskip\noindent

This paper is organized as follows. In the next section we fix some
notation, state definitions and basic estimates. In addition, we
shall rewrite the system \eqref{DKG1} by splitting $\psi$ as the sum
$P_+(D_x)\psi+ P_-(D_x)\psi$, where $P_\pm(D_x)$ are the projections
onto the eigenspaces of the matrix $\alpha.D_x$. We also state the
reduction of Theorem \ref{Mainthm} to two $X^{s,b}$ bilinear
estimates. In Section 3 we review the crucial null structure of the
bilinear forms involved, and we discuss product estimates for
wave-Sobolev spaces $H^{s, b}$. In Section 4 we interpolate between
the product estimates from Section 3 to get a wider range of
estimates. In Sections 5 and 6 we apply the estimates from Sections
3 and 4 to prove the  bilinear estimates from Section 2. In Section
7 we prove that these bilinear estimates are optimal
up to some endpoint cases, by constructing counterexamples.\\

For simplicity we set $ M=m=0$ in the rest of the paper, but the
discussion can easily be modified to handle the massive case as
well.\\

\section{Notation and preliminaries}\label{section 2}
In estimates, we use the symbols $\lesssim $,  $\simeq$,  $\gtrsim$
to denote relations $\le $, $=$, $\ge $  up to a positive constant
which may depend on $s$ and $r$. Also, if  $K_1 \lesssim K_2
\lesssim K_1$ we will write   $K_1 \approx K_2$. If in the
inequality $\lesssim $  the multiplicative constant is much smaller
than 1 then we use the symbol $\ll$; similarly, if in $ \gtrsim$ the
constant is much greater than 1 then we use $\gg$. Throughout we use
the notation $\angles{\cdot} = 1 + \abs{\cdot}$. The characteristic
function of a set $A$ is denoted by $\bold{1}_A$. For $a \in \R$,
$a^{\pm}:= a \pm \epsilon$ for sufficiently small $\epsilon>0$. The
Fourier transforms in space and space-time are defined by
$$
  \widehat f(\xi) = \int_{\R^3} e^{-ix\cdot\xi} f(x) \, dx,
  \qquad
  \widetilde u(\tau,\xi) = \int_{\R^{1+3}} e^{-i(t\tau+x\cdot\xi)} u(t,x)
\, dt \, dx.
$$
 Then $ \widetilde {D_t u}=\tau
\widetilde u$, and $\widetilde{ D_x u}= \xi \widetilde u$. If
$\phi:\R^3\rightarrow \C$, we define the multiplier $\phi(D)$ by
$$\widehat{\phi(D) f}(\xi) = \phi(\xi) \widehat f(\xi).$$ Given $u(t,x)$,
we denote by $\fourierabs{u}$ the function whose space-time Fourier
transform is $\abs{\widetilde u}$. If $X,Y,Z$ are normed function
spaces, we use the notation
  $ X \cdot Y \hookrightarrow Z$
to mean that $$\norm{uv}_Z \lesssim \norm{u}_X \norm{v}_Y.$$

In the study of non-linear wave equations it is standard that the
following spaces of Bourgain-Klainerman-Machedon
 type are used.
For $a, b \in \R$, define $X^{a,b}_\pm$, $H^{a,b}$ to be the
completions of $\mathcal S(\R^{1+3})$ with respect to the norms
\begin{align*}
   \norm{u}_{X^{a,b}_\pm} &= \bignorm{\angles{\xi}^a
\angles{\tau\pm \abs{\xi}}^b \widetilde
u(\tau,\xi)}_{L^2_{\tau,\xi}},
   \\
   \norm{u}_{H^{a,b}} &= \bignorm{\angles{\xi}^a \angles{\abs
{\tau} - \abs{\xi}}^b \widetilde u(\tau,\xi)}_{L^2_{\tau,\xi}},
          \end{align*}
    We also need the
restrictions to a time slab $
  S_T = (0,T) \times \R^3,
$ since we study local in time solutions.  The restriction
$X_\pm^{a,b}(S_T)$ is a Banach space with norm
$$ \norm{u}_{X_\pm^{a,b}(S_T)} = \inf_{ \tilde{u}_{\mid {S_T}}=u }
\norm{\tilde u}_{X_\pm^{a,b}}.$$  The restrictions $H^{a,b}(S_T)$ is
defined in the same way. We now collect some facts about these
spaces which will be needed in the later sections. It is well known
that the following interpolation property holds:
\begin{equation}\label{b-int}
\left(H^{s_0,\alpha_0},H^{s_1,\alpha_1}\right)_{[\theta]}=H^{s,\alpha},
  \end{equation} where $0\le\theta\le1$,  $s=(1-\theta)s_0+\theta s_1$,
$\alpha=(1-\theta)\alpha_0+\theta \alpha_1$  and  $(.,.)_{[\theta]}$
is the intermediate space with respect to the interpolation pair
(.,.). It immediately follows from a general bilinear complex
interpolation for Banach spaces (see for example ~\cite{bl76}) that
if
     \begin{align*}
   H^{a_0,\alpha_0 }\cdot H^{b_0,\beta_0}& \embeds H^{-c_0, -\gamma_0}, \\
    H^{a_1, \alpha_1} \cdot H^{b_1,\beta_1}&\embeds H^{-c_1,-\gamma_1},
  \end{align*}
  then
    \begin{align*}
    H^{a, \alpha}\cdot H^{b,\beta}\embeds H^{-c,-\gamma},
  \end{align*} where $0\le\theta\le1$, $a=(1-\theta)a_0+\theta a_1$,
$b=(1-\theta)b_0+\theta b_1$, $c=(1-\theta)c_0+\theta c_1$,
$\alpha=(1-\theta)\alpha_0+\theta \alpha_1$,
$\beta=(1-\theta)\beta_0+\theta \beta_1$ and
$\gamma=(1-\theta)\gamma_0+\theta \gamma_1$.\\

  We shall also need the fact that
\begin{align}\label{embxhc}
 X_\pm^{a,b} (S_T)\embeds H^{a,b} (S_T) &\embeds C\bigl([0,T], H^a\bigr)
\quad \text{provided} \ \ b>1/2,\\
  \label{embxh}
  X_\pm^{a,b} \embeds H^{a,b} &\quad \text{for all } \ \ b\ge 0.
\end{align} The embedding \eqref{embxhc} is equivalent to the estimate
 $$\norm{u(t)}_{H^{a}}\le C_1\norm{u}_{H^{a,b} (S_T)} \le C_2 \norm{u}_{
X_\pm^{a,b} (S_T)},$$
 for all $ 0\le t \le T$ and $C_1, C_2 \ge 1$.
 In the first inequality, $C_1$ will depend on $b$ (see ~\cite{dfs2005} for the proof), and the second inequality follows from the fact that
 $ \angles{\abs{\tau}-\abs{\xi}}\le \angles{\tau\pm\abs{\xi}}$ (hence
$C_2=1$), which also implies \eqref{embxh}. \\

Following ~\cite{dfs2005}, we diagonalize the system by defining the
projections
$$ P_{\pm}(\xi) = \frac{1}{2} \left( I \pm \hat\xi \cdot \alpha \right), $$
where $ \hat\xi \equiv \frac{\xi}{\abs{\xi}}$. Then the spinor field
splits into $\psi = \psi_+  +  \psi_-$, where $\psi_\pm = P_\pm(D_x)
\psi$. Now applying  $P_\pm(D_x)$ to the Dirac equation in
\eqref{DKG1}, and using the identities
\begin{equation}\label{identities}
\begin{aligned}
\alpha \cdot D_x &= \abs{D_x} P_+(D_x) - \abs{D_x}P_-(D_x),\\ \quad
P^2_\pm(D_x)= & P_\pm(D_x)  \quad \text{and} \quad
P_\pm(D_x)P_\mp(D_x)=0,
\end{aligned}
\end{equation}
 we obtain
\begin{equation}\label{DKG3}
\left\{
\begin{aligned}
  & \bigl( D_t + \abs{D_x} \bigr) \psi_+ =  P_+(D_x)(\phi \beta \psi),
  \\
  & \bigl( D_t - \abs{D_x} \bigr) \psi_- = P_-(D_x)(\phi \beta \psi),
  \\
  &\square \phi = - \innerprod{\beta \psi}{\psi},
\end{aligned}
\right.
\end{equation} which is the system we shall work with.

We iterate in the spaces
$$
   \psi_+ \in X_+^{s,\sigma}(S_T),
   \quad
   \psi_- \in X_-^{s,\sigma}(S_T),
   \quad
   (\phi,\partial_t \phi) \in H^{r,\rho} \times H^{r-1,\rho}(S_T),
$$
where
$$
   \frac{1}{2} < \sigma, \rho < 1
$$
will be chosen depending on $r,s$. By a standard argument (see
~\cite{dfs2005} for details) Theorem \ref{Mainthm} then reduces to
\begin{align}
\label{Bilinear-Dirac} \norm{ P_{\pm}(D_x)( \phi \beta
P_{[\pm]}(D_x)\psi )}_{ X^{s,\sigma-1+\varepsilon}_\pm }& \lesssim
\norm{ \phi }_{ H^{r,\rho} } \norm{\psi}_{ X^{s,\sigma}_{[\pm] } },\\
\label{Bilinear-KG}
   \norm{ \innerprod { \beta P_{[\pm]}(D_x)\psi } { P_{\pm}(D_x)\psi'}}_{
H^{r-1,\rho-1+\varepsilon} } &\lesssim
\norm{\psi}_{X^{s,\sigma}_{[\pm]} }\norm{\psi'}_{ X^{s,\sigma}_\pm
},
\end{align}
  for all $\phi, \psi, \psi' \in \mathcal{S}( \R^{1+3})$, where $\pm$ and $[\pm]$ denote
independent signs, and $\varepsilon>0$ is sufficiently small.

 But in ~\cite{dfs2005}, it was shown that
\eqref{Bilinear-Dirac} is equivalent, by duality, to an estimate
similar to \eqref{Bilinear-KG}, namely
\begin{equation}\label{Bilinear-DiracD}\tag{$\text{\ref{Bilinear-Dirac}}'$}
  \norm{\innerprod{\beta P_{[\pm]}(D_x)
\psi}{P_{\pm}(D_x)\psi'}}_{H^{-r,-\rho}}
  \lesssim \norm{\psi}_{{X_{[\pm]}^{s,\sigma}} }
  \norm{\psi'}_{X_{\pm}^{-s,1-\sigma-\varepsilon} },
\end{equation} for all $ \psi, \psi' \in \R^{1+3}$. Note that in
this formulation, the bilinear null form $\innerprod{\beta
P_{[\pm]}(D_x) \psi}{P_{\pm}(D_x)\psi'}$, \ appears again. Thus,
Theorem \ref{Mainthm} has been reduced to proving
\eqref{Bilinear-DiracD} and \eqref{Bilinear-KG}. We shall prove the
following theorem, which implies Theorem \ref{Mainthm}.
\begin{theorem}\label{Reducedthm} Suppose
\begin{equation}\label{condrs}
  s>0,  \qquad
  \max\left(\frac{1}{2}+\frac{s}{3}, \frac{1}{3}+\frac{2s}{3},
  s\right)<r< \min \left(\frac{1}{2}+2s, 1+s\right).
\end{equation}
Then there exist $1/2<\rho, \sigma <1$ and $\varepsilon>0$ such that
\eqref{Bilinear-DiracD} and \eqref{Bilinear-KG} hold simultaneously
for all $\psi,\psi' \in \mathcal S(\R^{1+3})$. Moreover, in addition
to \eqref{condrs} we can allow $r=1+s$ if $s>1/2$, and $r=s$ if
$s>1$. The parameters $\rho, \sigma$ can be chosen as follows:
\begin{equation}\label{condrho}
\rho=1/2+\varepsilon,
\end{equation}
\begin{equation}
 \sigma=
 \begin{cases}\label{condsigma}
 1/2+s/3   \quad &\text{if} \ \ (s,r) \in R_1, \\
1/2 + s  \quad  &\text{if} \  \ (s,r) \in R_2, \\
5/6-s/3+\varepsilon \quad   &\text{if} \ \ (s,r) \in R_3,\\
3/2-s+4\varepsilon \quad   &\text{if} \ \ (s,r) \in R_4  , \\
1-\varepsilon \quad    &\text{if} \ \ (s,r) \in BD ,  \\
\text{any number in }(1/2,1)  \quad &\text{otherwise},
 \end{cases}
\end{equation}
with $\varepsilon>0$ sufficiently small depending on $s,r$ (see
 \eqref{regions} to locate $(s,r)$ in the case of
\eqref{condsigma}).
\end{theorem}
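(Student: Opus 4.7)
The plan is to prove the two bilinear estimates \eqref{Bilinear-DiracD} and \eqref{Bilinear-KG} simultaneously, exploiting the fact (established in ~\cite{dfs2005}) that $\innerprod{\beta P_{[\pm]}(D_x)\psi}{P_{\pm}(D_x)\psi'}$ is a null form: its symbol carries the angle $\vangle(\pm_1\xi,\pm_2\eta)$ between the two spatial Fourier frequencies of the paired spinors. A standard identity bounds this angle by $\bigl(\mathcal M/\min(\abs{\xi},\abs{\eta})\bigr)^{1/2}$, where $\mathcal M$ is the minimum of the three modulation weights $\angles{\abs{\tau_1}-\abs{\xi}}$, $\angles{\abs{\tau_2}-\abs{\eta}}$, $\angles{\abs{\tau_1+\tau_2}-\abs{\xi+\eta}}$ attached to the three factors that appear in the dual integral. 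Transferring this gain onto the factor whose modulation realises the minimum (and paying the $\min(\abs{\xi},\abs{\eta})^{-1/2}$ frequency loss) reduces each of \eqref{Bilinear-DiracD} and \eqref{Bilinear-KG} to a finite list of product estimates of the form $H^{a_1,b_1} \cdot H^{a_2,b_2} \embeds H^{-a_3,-b_3}$ in wave-Sobolev spaces.

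First I would fix $\rho=1/2+\varepsilon$ as in \eqref{condrho}; this is the natural choice, since we only need $\phi \in C([0,T],H^r)$, which by \eqref{embxhc} is automatic once $\rho>1/2$. The choice of $\sigma$ is more delicate, because it must balance both estimates at once and is driven by the location of $(s,r)$ within the region \eqref{condrs}. I would then assemble in Section 3 the relevant product estimates for $H^{a,b}$ spaces---energy, Strichartz, and Klainerman--Machedon bilinear estimates---at a few endpoint configurations, and in Section 4 use the bilinear complex interpolation principle stated after \eqref{b-int} to generate a continuous two-parameter family of such estimates. The proofs of \eqref{Bilinear-DiracD} and \eqref{Bilinear-KG} in Sections 5--6 then proceed by decomposing into the three subcases corresponding to which of the three modulations is minimal, and in each case matching the resulting product estimate against the interpolated family.

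The piecewise definition of $\sigma$ in \eqref{condsigma} corresponds to tracking which boundary line of the admissible region is active for a given $(s,r)$. On $R_1$ the binding constraint is the scaling line $r=1/2+s/3$, so one takes $\sigma=1/2+s/3$; on $R_2$ the Klein--Gordon estimate forces $\sigma=1/2+s$; on $R_3$ it is the Dirac estimate that dictates $\sigma=5/6-s/3+\varepsilon$; and similarly for $R_4$ and the segment $BD$ where $\sigma$ must be pushed essentially up to $1$. For each region one substitutes the listed $(\sigma,\rho)$ into the interpolated product estimate and verifies that every exponent inequality is satisfied strictly, with $\varepsilon>0$ taken sufficiently small depending on $(s,r)$.

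The main obstacle is arranging, in each subregion, a single choice of $\sigma$ that is simultaneously admissible for both \eqref{Bilinear-DiracD} and \eqref{Bilinear-KG}. These impose competing constraints---raising $\sigma$ helps the Dirac estimate but hurts the Klein--Gordon one, and vice versa---and it is precisely this competition that generates the boundary lines of \eqref{condrs}. A subsidiary difficulty is posed by the endpoint extensions $r=1+s$ for $s>1/2$ and $r=s$ for $s>1$: here the embeddings are tight and one cannot afford to lose an $\varepsilon$, so dedicated endpoint versions of the product estimates must be used in place of a generic interpolation. Once the null-form identity of Section 3 and the interpolated family of Section 4 are in place, however, the remainder of the proof is a careful but essentially mechanical exponent verification, carried out region by region.
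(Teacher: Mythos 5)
Your overall strategy is the same as the paper's: exploit the null structure of $\innerprod{\beta P_{[\pm]}(D_x)\psi}{P_{\pm}(D_x)\psi'}$, convert the angular gain into modulation weights, reduce both \eqref{Bilinear-DiracD} and \eqref{Bilinear-KG} to a finite list of $H^{a,b}$ product embeddings, generate those embeddings by bilinear interpolation from Klainerman--Machedon type estimates, and choose $\sigma$ region by region with $\rho=1/2+\varepsilon$. However, there are two genuine problems. First, the key estimate you invoke is wrong as stated: you bound the angle by $\bigl(\mathcal M/\min(\abs{\xi},\abs{\eta})\bigr)^{1/2}$ with $\mathcal M$ the \emph{minimum} of the three modulations, and you propose to transfer the gain onto the factor whose modulation realises the minimum. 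This fails: for two free waves the input modulations are trivial, so the minimum is $O(1)$, yet the angle between the spatial frequencies can be of order one at arbitrarily large frequency. The correct statement, which the paper uses, is $\theta_{\pm}^2\sim \abs{\xi}\kappa_{\pm}/(\fixedabs{\eta}\fixedabs{\eta-\xi})$ together with $\kappa_{\pm}\le\abs{\Gamma}+\abs{\Theta}+\abs{\Sigma_\pm}$ (the \emph{sum}, equivalently the maximum, of the modulations), and the case decomposition is according to which modulation dominates; also note that for the $(+,-)$ interaction $\theta_-$ obeys a different relation and the regime $\abs{\xi}\ll\fixedabs{\eta}\sim\fixedabs{\eta-\xi}$ needs separate treatment, which your sketch does not mention. (Incidentally, the monotonicity you describe is reversed: raising $\sigma$ makes \eqref{Bilinear-KG} easier but \eqref{Bilinear-DiracD} harder, since the second factor there carries the exponent $1-\sigma-\varepsilon$.)

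Second, and more substantially, your proposal stops exactly where the theorem's content begins. Calling the region-by-region work ``essentially mechanical exponent verification'' understates it: after the modulation splitting one must verify roughly six families of embeddings ($I^\pm_{1,2,3}$ for the Klein--Gordon part and $J^\pm_{1,2,3}$ for the dual Dirac part) for each of the regions $R_1,\dots,R_4$ and the segment $BD$, with the specific $\sigma$ from \eqref{condsigma}, and several of the required embeddings are borderline and do \emph{not} follow from naive interpolation of the standard product laws. For instance the paper needs $H^{1/2+\epsilon,1/2^+}\cdot H^{\epsilon,1/2^+}\embeds H^{-1+\epsilon,1/2}$ (Theorem \ref{speprodthm3}), whose proof uses the hyperbolic Leibniz rule plus a Foschi--Klainerman endpoint estimate, and the tailored interpolated estimates \eqref{bie1}--\eqref{bie9}; without identifying such ingredients (and the duality step that turns \eqref{Bilinear-Dirac} into \eqref{Bilinear-DiracD}, which you use implicitly), the claim that the chosen $\sigma$ works simultaneously for both bilinear estimates in every subregion is unsupported. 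So the plan is the right one, but as written it contains a step that would fail and leaves the decisive verifications, which are the actual proof, unexecuted.
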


\section{Null structure and a product law for wave Sobolev
spaces}\label{sectionnullprodlaw}

Let us first discuss the null structure in $\innerprod{\beta
P_{[\pm]}(D_x) \psi}{P_{\pm}(D_x)\psi'}$. The discussion here
follows ~\cite{dfs2005}. Taking the spacetime Fourier transform on
this bilinear form we get
\begin{multline*}
\left[\innerprod{\beta P_{[\pm]}(D_x)
\psi}{P_{\pm}(D_x)\psi'}\right]\stFT(\tau,\xi)\\
=\int_{\R^{1+3}} \innerprod{\beta P_{[\pm]}(\eta)\widetilde
\psi(\lambda,\eta)}
  {P_{\pm}(\eta-\xi)\widetilde \psi'(\lambda-\tau,\eta-\xi)} \, d\lambda
\, d\eta,
\end{multline*}
where we have $(\lambda-\tau, \eta-\xi)$ as an argument of $\tilde
\psi' $ instead of $(\tau-\lambda, \xi-\eta)$ because of the complex
conjugation in the inner product. Since
$P_{\pm}(\eta-\xi)^\dagger=P_{\pm}(\eta-\xi)$, and
$P_{\pm}(\eta-\xi)\beta=\beta P_{\mp}(\eta-\xi)$, we obtain
\begin{align*}
&\innerprod{\beta P_{[\pm]}(\eta)\widetilde \psi(\lambda,\eta)}
  {P_{\pm}(\eta-\xi)\widetilde
\psi'(\lambda-\tau,\eta-\xi)}\\
&=\innerprod{P_{\pm}(\eta-\xi)\beta P_{[\pm]}(\eta)\widetilde
\psi(\lambda,\eta)}
  {\widetilde \psi'(\lambda-\tau,\eta-\xi)}\\
  &=\innerprod{\beta P_{\mp}(\eta-\xi) P_{[\pm]}(\eta)\widetilde
\psi(\lambda,\eta)}
  {\widetilde \psi'(\lambda-\tau,\eta-\xi)}.
\end{align*}
The matrix $\beta P_{\mp}(\eta-\xi) P_{[\pm]}(\eta)$ is the symbol
of the bilinear operator $(\psi, \psi')\mapsto\innerprod{\beta
P_{[\pm]}(D_x) \psi}{P_{\pm}(D_x)\psi'}$.
 By
orthogonality, $P_{\mp}(\eta-\xi) P_{[\pm]}(\eta)$ vanishes when the
vectors $[\pm]\eta$ and $\pm(\eta-\xi)$ line up in the same
direction. The following lemma, proved in  ~\cite{dfs2005},
quantifies this cancellation. We shall use the notation
$\vangle(\eta,\zeta)$ for the angle between vectors $\eta,\zeta \in
\R^3$.

\begin{lemma}\label{NullLemma}
$\beta P_{\mp}(\eta-\xi) P_{[\pm]}(\eta) = O \left(
\vangle([\pm]\eta,\pm(\eta-\xi)) \right)$.
\end{lemma}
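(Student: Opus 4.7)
The plan is to reduce the lemma to the elementary observation that $P_+(\omega_1) P_+(\omega_2) = 0$ when $\omega_1 = -\omega_2$ (for unit vectors $\omega_1, \omega_2$), together with a first-order expansion about this antipodal configuration.

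First, since $\beta$ is unitary by \eqref{DiracIdentity1} ($\beta^\dagger = \beta$ and $\beta^2 = I$), the operator norm is invariant under left-multiplication by $\beta$, so it suffices to bound the matrix norm of $P_\mp(\eta-\xi) P_{[\pm]}(\eta)$. From the defining formula $P_\pm(\zeta) = \frac{1}{2}(I \pm \hat\zeta \cdot \alpha)$ one reads off the symmetry $P_\pm(\zeta) = P_+(\pm\zeta)$. Setting $\omega_1 := \mp\widehat{(\eta-\xi)}$ and $\omega_2 := [\pm]\hat\eta$, the matrix in question equals $P_+(\omega_1) P_+(\omega_2)$, and the angle $\theta := \vangle([\pm]\eta, \pm(\eta-\xi))$ is precisely $\vangle(\omega_2, -\omega_1)$. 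In particular, $\theta = 0$ corresponds to $\omega_1 = -\omega_2$, and then $P_+(\omega_1) P_+(\omega_2) = P_+(\omega_1) P_-(\omega_1) = 0$ by \eqref{identities}.

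The second step is to expand $P_+(\omega_1) P_+(\omega_2) = \frac{1}{4}(I + \omega_1\cdot\alpha)(I + \omega_2\cdot\alpha)$ and apply the Clifford relation $\alpha^i\alpha^j + \alpha^j\alpha^i = 2\delta_{ij} I$ from \eqref{DiracIdentity1}. This produces three contributions: a scalar coefficient $1 + \omega_1\cdot\omega_2 = 1 - \cos\theta$, a linear piece $(\omega_1 + \omega_2)\cdot\alpha$, and an antisymmetric commutator piece with coefficients $\omega_1^i \omega_2^j - \omega_1^j \omega_2^i$. Since $\omega_1\cdot\omega_2 = -\cos\theta$, the scalar part is $O(\theta^2)$, while $|\omega_1 + \omega_2| = 2\sin(\theta/2)$ and $|\omega_1 \wedge \omega_2| = \sin\theta$ are both $O(\theta)$. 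The matrices $\alpha^j$ and $[\alpha^i, \alpha^j]$ are bounded, so the whole product has operator norm $O(\theta)$, as required.

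I do not anticipate a real obstacle; the only care required is the sign bookkeeping in the reduction to $P_+(\omega_1) P_+(\omega_2)$, to verify that the antipodal configuration $\omega_1 = -\omega_2$ at which the product vanishes matches the angle $\vangle([\pm]\eta, \pm(\eta-\xi))$ named in the lemma. Once this identification is in place, the remainder is essentially a first-order Taylor expansion of the projection product around that configuration.
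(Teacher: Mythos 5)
Your proof is correct, and it fills in an argument that the paper itself does not supply: Lemma \ref{NullLemma} is quoted from \cite{dfs2005} without proof. Your sign bookkeeping checks out: with $\omega_1=\mp\widehat{(\eta-\xi)}$, $\omega_2=[\pm]\hat\eta$ one has $P_\mp(\eta-\xi)P_{[\pm]}(\eta)=P_+(\omega_1)P_+(\omega_2)$ and $\theta=\vangle(\omega_2,-\omega_1)$, so $\omega_1\cdot\omega_2=-\cos\theta$, $\abs{\omega_1+\omega_2}=2\sin(\theta/2)$, $\abs{\omega_1\wedge\omega_2}=\sin\theta$, and the three pieces of the Clifford expansion are $O(\theta^2)$, $O(\theta)$, $O(\theta)$ respectively, uniformly for $\theta\in[0,\pi]$; discarding $\beta$ is legitimate since it is unitary. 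Two small remarks. First, the full anticommutation relation $\alpha^i\alpha^j+\alpha^j\alpha^i=2\delta_{ij}I$ for $i\neq j$ is not literally listed in \eqref{DiracIdentity1} (which records only the squares and the $\beta$-anticommutation), but it follows at once from the block form of the $\alpha^j$ and the Pauli-matrix relations, so this is cosmetic. Second, the argument in the cited reference is shorter than your expansion: since $P_\mp(\eta-\xi)P_\pm(\eta-\xi)=0$ by \eqref{identities}, one writes $P_\mp(\eta-\xi)P_{[\pm]}(\eta)=P_\mp(\eta-\xi)\bigl(P_{[\pm]}(\eta)-P_\pm(\eta-\xi)\bigr)$, and the difference of projections equals $\tfrac12\bigl([\pm]\hat\eta-(\pm)\widehat{(\eta-\xi)}\bigr)\cdot\alpha$, whose norm is the chord length and hence $O(\theta)$. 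That route avoids computing the product of two projections altogether; your version buys a fully explicit decomposition (and shows the scalar part is actually $O(\theta^2)$), at the cost of slightly more algebra. Either way the lemma stands.
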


As a result of this lemma, we get
\begin{align}\label{NullFormEstimate}
 \abs{\innerprod{\beta P_{[\pm]}(D_x) \psi}{P_{\pm}(D_x)
\psi'}\stFT(\tau,\xi)}
  \lesssim \int_{\R^{1+3}} \theta_{[\pm],\pm} \abs{\widetilde
\psi(\lambda,\eta)}
  \abs{\widetilde \psi'(\lambda-\tau,\eta-\xi)} \, d\lambda \, d\eta,
\end{align}
where $\theta_{[\pm],\pm} =
\vangle\bigl([\pm]\eta,\pm(\eta-\xi)\bigr)$.\\

 The strategy for proving Theorem \ref{Reducedthm} is to make use of
this null form estimate, \eqref{NullFormEstimate}, and reduce
\eqref{Bilinear-DiracD} and \eqref{Bilinear-KG} to some well-known
bilinear spacetime estimates of Klainerman-Machedon type for
products of free waves. We now discuss some product laws for the
wave
Sobolev spaces $H^{a,\alpha}$ in the following theorems.\\

\begin{theorem}\label{prodthm1-spec}
Let $d>1/2$. Then
\begin{equation}\label{prodembed1spec}
   H^{a, d} \cdot H^{b, d} \embeds L^2,
\end{equation} provided that
\begin{align*}
&a, b\ge 0, \quad \text{and}\\
 & a+b>1.
\end{align*}
\end{theorem}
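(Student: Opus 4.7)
The plan is to combine a transfer principle for $H^{a,b}$-spaces with the $L^4_{t,x}$-Strichartz estimate for free waves in $1+3$ dimensions. Since $d > 1/2$, the transfer principle applies: any $u \in H^{a,d}(\R^{1+3})$ admits a representation
\[
u(t,x) = \sum_{\pm} \int_\R e^{i\mu t}\bigl(e^{\pm it|D_x|} f_{\mu}^{\pm}\bigr)(x)\, d\mu
\]
with
\[
\sum_{\pm} \int_\R \angles{\mu}^{2d} \norm{f_{\mu}^{\pm}}_{H^a}^2 \, d\mu \lesssim \norm{u}_{H^{a,d}}^2,
\]
and similarly for $v$. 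Substituting into $\norm{uv}_{L^2}$, applying Minkowski in $(\mu,\mu')$, and using Cauchy--Schwarz---where the convergence of $\int \angles{\mu}^{-2d}\, d\mu$ uses $d > 1/2$---reduces \eqref{prodembed1spec} to the free-wave bilinear estimate
\[
\bignorm{(e^{\pm_1 it|D_x|} f)(e^{\pm_2 it|D_x|} g)}_{L^2_{t,x}(\R^{1+3})} \lesssim \norm{f}_{H^a}\norm{g}_{H^b}
\]
for each sign choice $\pm_1,\pm_2$.

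Second, I would prove this free-wave estimate by combining H\"older in $(t,x)$ with the $L^4$-Strichartz estimate $\norm{e^{\pm it|D_x|} f}_{L^4_{t,x}} \lesssim \norm{f}_{\dot H^{1/2}}$, which directly yields the symmetric case $a=b=1/2$ on the critical line $a+b=1$. To cover the open region $a+b>1$ with $a,b\ge 0$, I would interpolate this estimate with the trivial endpoint estimate $\norm{uv}_{L^2_{t,x}} \le \norm{u}_{L^2}\norm{v}_{L^\infty}$, where the $L^\infty$-control comes from the embedding $H^{b,d} \embeds L^\infty_t H^{b}_x \embeds L^\infty_{t,x}$ (valid for $d>1/2$ and $b>3/2$), together with anisotropic Strichartz estimates $\norm{e^{\pm it|D_x|} f}_{L^q_t L^r_x} \lesssim \norm{f}_{\dot H^s}$ for wave-admissible pairs $(q,r)$ and the corresponding Sobolev embeddings $H^s(\R^3)\embeds L^p(\R^3)$ with $1/p=1/2-s/3$. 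Bilinear complex interpolation should then fill out the desired open region.

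The main technical difficulty is in covering the full open region $\{(a,b): a,b\ge 0,\ a+b>1\}$, since a single application of Strichartz plus H\"older only reaches the symmetric point $a=b=1/2$; one needs to interpolate among several families of endpoint estimates and then verify that their convex hull, together with the monotonicity of the $H^{a,d}$-scale in $a$, exhausts the region. A secondary technicality is the transition from homogeneous $\dot H^{1/2}$-based Strichartz to the inhomogeneous norms used in the statement, particularly for low frequencies; this is handled by a Littlewood--Paley decomposition and Bernstein's inequality, which give $\dot H^{1/2} \embeds H^{1/2}$-type control in the relevant dyadic range.
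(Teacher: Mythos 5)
There is a genuine gap: your scheme cannot reach the boundary cases $\min(a,b)=0$ with $1<\max(a,b)\le 3/2$, which are included in the statement (and are exactly the cases the paper needs later, e.g.\ $H^{1+\epsilon,1/2+\epsilon}\cdot H^{0,{1/2}^+}\embeds L^2$ in the proofs of \eqref{bie1}, \eqref{bie1e}, \eqref{bie4}). After the transfer step, your tools produce only two families of free-wave estimates: (i) the scaling-line estimates $\norm{uv}_{L^2}\lesssim\norm{f}_{H^{s_1}}\norm{g}_{H^{1-s_1}}$ with $0<s_1<1$, coming from H\"older with two sharp-admissible Strichartz norms (note that any H\"older split $L^{q_1}_tL^{r_1}_x\times L^{q_2}_tL^{r_2}_x$ of $L^2_{t,x}$ forces both pairs to be sharp admissible, hence $s_i=2/q_i\in(0,1)$); and (ii) the elementary bound $H^{3/2^+,d}\cdot H^{0,d}\embeds L^2$ via $L^2_{t,x}\times L^\infty_{t,x}$. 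The only Strichartz norm controlled by $\norm{g}_{L^2}$ with no derivatives is $L^\infty_tL^2_x$ (the pairs $(q,2)$, $q<\infty$, are not admissible, and globally in time $\norm{e^{it\fixedabs{D_x}}g}_{L^q_tL^2_x}=\infty$), which would force the other factor into $L^2_tL^\infty_x$; that endpoint fails in $1+3$ dimensions even for frequency-localized data (Klainerman--Machedon's Besicovitch-type counterexample), so no loss of $\epsilon$ derivatives rescues it. Moreover, bilinear interpolation of (i) and (ii) cannot fill the hole: to land at $(a,0)$ the second exponent $(1-\theta)(1-s_1)$ must vanish, forcing $\theta=1$ and hence $a\ge 3/2^+$. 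So points such as $(a,b)=(1+\epsilon,0)$, which satisfy the hypotheses $a,b\ge 0$, $a+b>1$, are unreachable by H\"older, linear Strichartz, Sobolev embedding and interpolation alone.

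This is precisely where the paper's proof differs in an essential way: it starts from the genuinely \emph{bilinear} free-wave estimate $\norm{uv}_{L^2(\R^{1+3})}\lesssim\norm{u_0}_{H^{1+\varepsilon}}\norm{v_0}_{L^2}$, obtained by summing the dyadic bilinear estimates of Theorem 12.1 in ~\cite{fs2000} as in Corollary 3.3 of ~\cite{bms2005}; this estimate lives strictly above the scaling line at the asymmetric corner and is not a consequence of linear Strichartz theory (on the scaling line the corresponding endpoint $s_1=1$, $s_2=0$ is excluded in Theorem \ref{bilhomthm}). Transferring it and interpolating it with its mirror image then covers all of $a,b\ge0$, $a+b>1$ at once. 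Your outline is correct in spirit for the interior region $a,b>0$ (there the scaling-line family plus monotonicity in $a,b$ already suffices, and your transfer and low-frequency remarks are fine), but to prove Theorem \ref{prodthm1-spec} as stated you must import a bilinear input of Klainerman--Machedon/Foschi--Klainerman type for the $H^{1+\varepsilon}\times L^2$ interaction; without it the argument does not close.
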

\begin{proof}
By the same proof as in Corollary 3.3 in ~\cite{bms2005}, but using
the dyadic estimates in Theorem 12.1 in ~\cite{fs2000}, we have, for
any $\varepsilon>0$,
\begin{equation*}
\norm{uv}_{L^2(\R^{1+3)}} \lesssim \norm{u_0}_{
H^{1+\varepsilon}(\R^3)} \norm{v_0}_{L^2(\R^3)}.
\end{equation*}
 It follows by the transfer principle  (see
~\cite{dfs2005}, Lemma 4) that
$$H^{1+\varepsilon, d} \cdot H^{0, d} \embeds L^2. $$ Now,
interpolation between
\begin{align*}
& H^{1+\varepsilon, d} \cdot H^{0, d} \embeds L^2, \\
 &H^{0, d} \cdot H^{1+\varepsilon , d} \embeds L^2,
\end{align*}
gives
$$H^{(1+\varepsilon)(1-\theta), d} \cdot H^{(1+\varepsilon)\theta, d} \embeds L^2,$$
for $\theta\in [0,1]$. If there exists $\theta\in [0,1]$ such that
$a\ge(1+\varepsilon)(1-\theta)$ ( $\Leftrightarrow \theta\ge
1-a/(1+\varepsilon))$ and $b\ge(1+\varepsilon)\theta$
($\Leftrightarrow \theta\le b/(1+\varepsilon)$), then we have
$$H^{a, d} \cdot H^{b, d} \embeds L^2.$$ If $a,b\ge 0$ and $a+b>1$, then such $\theta\in
[0,1]$ exists, if we choose $\varepsilon>0$ small enough. This
proves Theorem \ref{prodthm1-spec}.
\end{proof}

\begin{theorem}\label{bilhomthm}~\cite{fs2000, km93, km96}.
 Let $s_1, s_2, s_3 \in \R$.
For free waves $u(t)=e^{\pm it\abs{D_x}}u_0$ and $v(t)=e^{[\pm]
it\abs{D_x}}u_0$ (where $\pm$ and $[\pm]$ are independent signs), we
have the estimate
\begin{equation}
\norm{\abs{D_x}^{-s_3}(uv)}_{L^2(\R^{1+3)}}\lesssim \norm{u_0}_{\dot
H^{s_1}} \norm{v_0}_{\dot H^{s_2}}
\end{equation}
if and only if
\begin{equation}\label{prodconda}
s_1 + s_2 + s_3=1, \quad s_1 + s_2 >1/2, \quad
       s_1,s_2 <1.\\
\end{equation}
\end{theorem}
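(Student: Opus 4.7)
The plan is to prove sufficiency by Fourier analysis on the characteristic cones, and then establish sharpness via explicit counterexamples, one tailored to each of the three inequalities.

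For sufficiency, begin with Plancherel together with the observation that for free waves $\widetilde u(\lambda,\eta) = c\,\widehat{u_0}(\eta)\,\delta(\lambda\mp|\eta|)$, so that
$$\widetilde{uv}(\tau,\xi) = c\int_{\R^3} \widehat{u_0}(\eta)\,\widehat{v_0}(\xi-\eta)\,\delta\bigl(\tau \mp |\eta| [\mp] |\xi-\eta|\bigr)\,d\eta.$$
Setting $\widehat{f_1}(\eta) = |\eta|^{s_1}\widehat{u_0}(\eta)$ and $\widehat{f_2}(\eta) = |\eta|^{s_2}\widehat{v_0}(\eta)$, applying the Cauchy--Schwarz inequality in the $\eta$-integral, and resolving the delta on the constraint surface, the desired estimate reduces to the uniform pointwise bound
$$\sup_{(\tau,\xi)\in\R^{1+3}} |\xi|^{-2s_3} \int_{\Sigma_{\tau,\xi}} \frac{d\sigma(\eta)}{|\eta|^{2s_1}|\xi-\eta|^{2s_2}} \lesssim 1,$$
where $\Sigma_{\tau,\xi}$ is the $\eta$-set cut out by the sign constraint, carrying its induced surface measure weighted by the Jacobian arising from the delta resolution. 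This is the standard Foschi--Klainerman reduction.

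The heart of the matter is the evaluation of this interaction integral. The two sign regimes are treated separately: in the same-sign case, $\Sigma_{\tau,\xi}$ is a prolate ellipsoid of revolution with foci $0$ and $\xi$ (nonempty only when $\tau > |\xi|$); in the opposite-sign case, it is one sheet of a hyperboloid with the same foci (nonempty when $|\tau| < |\xi|$). Introducing coordinates $r = |\eta|$, $\theta = \vangle(\eta,\xi)$, and azimuthal $\varphi$, both the measure and the integrand become completely explicit, and a direct computation yields the desired uniform bound exactly when the three stated conditions hold: $s_1+s_2+s_3=1$ is forced by the scaling symmetry of both sides; $s_i < 1$ ensures integrability near the pole-like singularities at $\eta=0$ and $\eta=\xi$; and $s_1+s_2 > 1/2$ is needed for the angular integration to converge uniformly in the high-frequency limit.

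For necessity, three families of counterexamples are required. Isotropic rescaling of the data forces $s_1+s_2+s_3=1$. Choosing $\widehat{u_0}$ concentrated at a single high frequency $\eta_0$ and $\widehat{v_0}$ a smooth bump near the origin makes $uv$ essentially a modulated wave packet at frequency $\eta_0$, and comparing the two sides forces $s_1 < 1$; the symmetric construction gives $s_2 < 1$. For the last condition, Knapp-type data with $\widehat{u_0}$ and $\widehat{v_0}$ concentrated in narrow angular sectors about nearly parallel (respectively antiparallel) high-frequency directions -- the choice depending on the sign configuration -- produces an output $\widetilde{uv}$ supported on a set of very small measure compared to the input norms, forcing $s_1+s_2 > 1/2$. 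The main obstacle will be the clean evaluation of the interaction integral, particularly in the high-low regime $|\eta| \ll |\xi|$ and in the nearly (anti)parallel regime, since these are precisely where the restrictions $s_i < 1$ and $s_1+s_2 > 1/2$ become binding.
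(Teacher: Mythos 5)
The paper does not prove this theorem; it is quoted verbatim from Foschi--Klainerman \cite{fs2000} (with the Klainerman--Machedon antecedents \cite{km93,km96}) and used as a black box, so there is no ``paper's own proof'' to compare against. That said, your sketch faithfully reproduces the strategy of \cite{fs2000}: Plancherel and the cone delta reduce the bilinear estimate to a uniform bound on the weighted surface integral over an ellipsoid (same signs, support $\tau>\abs{\xi}$) or one sheet of a hyperboloid (opposite signs, support $\abs{\tau}<\abs{\xi}$), after which the three exponent conditions emerge exactly where you place them --- $s_1+s_2+s_3=1$ from scale invariance, $s_1,s_2<1$ from integrability near the two foci $\eta=0$ and $\eta=\xi$, and $s_1+s_2>1/2$ from the degenerate angular (Knapp) regime. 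The necessity constructions (rescaling, single high-frequency concentration against a low-frequency bump, and Knapp sectors with near-(anti)parallel directions) are likewise the ones used in \cite{fs2000}. The only caveat is that you assert rather than carry out the two steps that contain essentially all of the work: the explicit evaluation of the interaction integral in the focal and near-flat regimes (which is several pages of computation in \cite{fs2000}, and is where the sign configuration actually makes a difference in the geometry even though the final conditions coincide), and the matching lower bounds for the Knapp-type data. You flag this yourself at the end, so the outline is honest; but as written it is a roadmap to the known proof rather than a self-contained argument.
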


 As an application of Theorems
\ref{prodthm1-spec} and \ref{bilhomthm} we have the following:

\begin{theorem}\label{prodthm1}
Suppose $s_1, s_2, s_3 \in \R$ \ and \ $d>1/2$. Then
\begin{equation}\label{prodembed1}
   H^{s_1, d} \cdot H^{s_2, d} \embeds H^{-s_3,0}
\end{equation}
provided $s_1, s_2, s_3$ satisfy
\begin{equation}\label{prodcond1}
\begin{aligned}
       &s_1 + s_2 + s_3=1, \quad s_1 + s_2 >1/2, \\
       &s_1 + s_3 \ge 0, \quad s_2 + s_3 \ge 0 ,\\
       &s_1, s_2 <1,
\end{aligned}
\end{equation}
or
   \begin{equation}\label{prodcond11}
\begin{aligned}
       &s_1 + s_2 + s_3>1, \quad s_1 + s_2 >1/2, \\
       &s_1 + s_3 \ge 0, \quad s_2 + s_3 \ge 0 .\\
\end{aligned}
\end{equation}
\end{theorem}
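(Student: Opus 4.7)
My plan is to handle the two cases \eqref{prodcond1} and \eqref{prodcond11} separately: \eqref{prodcond1} via transfer of Theorem~\ref{bilhomthm} to the $X^{s,b}$ setting, and \eqref{prodcond11} via bilinear complex interpolation (as set up in the preliminaries) between Theorem~\ref{prodthm1-spec} and the conclusion for \eqref{prodcond1}.

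For the scaling case \eqref{prodcond1}, first apply the transfer principle (Lemma~4 of~\cite{dfs2005}) to Theorem~\ref{bilhomthm} to obtain
\begin{equation*}
  \bignorm{|D_x|^{-s_3}(uv)}_{L^2_{t,x}} \lesssim \norm{u}_{\dot{X}^{s_1,d}_{\pm_1}}\,\norm{v}_{\dot{X}^{s_2,d}_{\pm_2}} \qquad (d>1/2)
\end{equation*}
for arbitrary signs. To pass from $\norm{\cdot}_{H^{s,d}}$ to $\norm{\cdot}_{X^{s,d}_\pm}$, split each factor $u = u^+ + u^-$ according to the sign of $\tau$: on $\pm\tau > 0$ one has $\langle|\tau|-|\xi|\rangle = \langle\tau\mp|\xi|\rangle$ and thus $\norm{u^\pm}_{X^{s,d}_{\mp}} \le \norm{u}_{H^{s,d}}$. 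The main obstacle is replacing the homogeneous spatial weights $|\xi|^{s_i}$ and $|\xi|^{-s_3}$ by their inhomogeneous counterparts $\langle\xi\rangle^{s_i}$ and $\langle\xi\rangle^{-s_3}$: these agree at frequencies $\gtrsim 1$, but differ at low frequencies, where a dyadic decomposition is required. The hypotheses $s_1+s_3\ge 0$ and $s_2+s_3\ge 0$ enter precisely to absorb the mismatch in the high-high-to-low-output interaction when $s_3<0$, while the (low, low)-to-(low) interaction is controlled by Theorem~\ref{prodthm1-spec} applied with large input exponents, via Bernstein's inequality on spatially frequency-bounded functions.

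For the case \eqref{prodcond11} with strict inequality $s_1+s_2+s_3>1$, apply bilinear complex interpolation, keeping the input modulation exponents at $d$ and the output modulation exponent at $0$. Given $(s_1,s_2,s_3)$, choose $a,b\ge 0$ with $a+b$ large enough that $(a,b,0)$ lies in the hypothesis of Theorem~\ref{prodthm1-spec}, set $\theta=(s_1+s_2+s_3-1)/(a+b-1)\in(0,1)$, and let the complementary endpoint be $s_i^{(0)}=(s_i-\theta s_i^{(1)})/(1-\theta)$ with $(s_i^{(1)})=(a,b,0)$; a short computation shows that $(s_i^{(0)})$ satisfies \eqref{prodcond1} for generic triples, and interpolation then yields \eqref{prodembed1}. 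The remaining edge cases (essentially those with both $s_1, s_2 \ge 1$) all fall within the regime of Theorem~\ref{prodthm1-spec}, and can be closed directly by that theorem: if $s_3\ge 0$ then $\langle\xi\rangle^{-s_3}\lesssim 1$ and the estimate reduces immediately; if $s_3<0$ apply the pointwise redistribution $\langle\xi\rangle^{-s_3}\lesssim \langle\xi_1\rangle^{-s_3}+\langle\xi_2\rangle^{-s_3}$ and absorb the weight onto the appropriate input factor, using the hypotheses $s_1+s_3\ge 0$ and $s_2+s_3\ge 0$.
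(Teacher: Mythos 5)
Your treatment of the scaling case \eqref{prodcond1} is essentially the paper's route (transfer of Theorem~\ref{bilhomthm}, then redistribution of a negative exponent by the Fourier-space triangle inequality), and that part is fine as a sketch. The genuine gap is in your treatment of \eqref{prodcond11}. Your single interpolation uses an endpoint of the form $(a,b,0)$ from Theorem~\ref{prodthm1-spec}, i.e.\ with output exponent $0$. Writing $\varepsilon=s_1+s_2+s_3-1>0$ and $\theta=\varepsilon/(a+b-1)$, the complementary endpoint has $s_1^{(0)}+s_2^{(0)}=\bigl(s_1+s_2-\theta(a+b)\bigr)/(1-\theta)$, and since $\theta(a+b)>\varepsilon$ for every admissible choice of $a+b$, the requirement $s_1^{(0)}+s_2^{(0)}>1/2$ forces $s_1+s_2>1/2+\varepsilon$, i.e.\ $s_3<1/2$. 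So no choice of $(a,b)$ reaches triples with $s_3\ge 1/2$, and these are not the edge cases you describe: for example $(s_1,s_2,s_3)=(0.6,\,0,\,0.6)$ satisfies \eqref{prodcond11}, has $s_1,s_2<1$, and your fallback (drop the weight $\angles{\xi}^{-s_3}\lesssim 1$ and invoke Theorem~\ref{prodthm1-spec}) fails because $s_1+s_2=0.6\le 1$. A similar boundary failure occurs for some triples with one negative input exponent, e.g.\ $(-0.1,\,1.3,\,0.1)$, where the constraint $\theta a\le s_1+s_3=0$ forces $a=0$ and then $s_2^{(0)}=1$ exactly, violating $s_2^{(0)}<1$, while your fallback again does not apply since $s_3\ge 0$ but $s_1<0$.

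The missing idea is the monotonicity reduction in $s_3$ that the paper uses: when $s_3\ge 1/2$, replace $s_3$ by $s_3'=1/2-\delta$ with $\delta$ small enough that $s_1+s_2+s_3'>1$ (possible precisely because $s_1+s_2>1/2$), using the trivial embedding $H^{-s_3',0}\embeds H^{-s_3,0}$; this reduces matters to $0<s_3<1/2$. In that range the paper does not run one global interpolation but argues by subcases: if $s_1,s_2\ge 0$ it simply lowers $s_1,s_2$ to $s_1',s_2'$ with $s_1'+s_2'+s_3=1$, $s_1',s_2'<1$ and quotes the already-proved case \eqref{prodcond1}; if one input exponent is negative it redistributes it by the Leibniz rule and closes the two resulting estimates by interpolating specific pairs (such as $H^{0,d}\cdot H^{1/2+\varepsilon,d}\embeds H^{-1/2,0}$ with $H^{0,d}\cdot H^{1+\varepsilon,d}\embeds L^2$). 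Your case $s_3\le 0$ (Leibniz plus Theorem~\ref{prodthm1-spec}) does agree with the paper, but as written your argument does not prove the theorem on the full region \eqref{prodcond11}; you need either the $s_3\mapsto s_3'$ lowering step or some other device to handle $s_3\ge 1/2$, and a correct delimitation (and treatment) of the boundary configurations where your interpolated endpoint hits $s_i^{(0)}=1$ or $s_1^{(0)}+s_2^{(0)}=1/2$.
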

\begin{proof} First, let us prove \eqref{prodembed1} for $s_1,s_2,s_3 \in \R$ satisfying \eqref{prodcond1}.
 By Theorem \ref{bilhomthm} and the transfer principle (see
~\cite{dfs2005}, Lemma 4), we obtain
\begin{equation}\label{prodembeda}
   H^{s_1, d} \cdot H^{s_2, d} \embeds H^{-s_3,0} \quad  \text{if} \ \
    \begin{cases}
& s_1 + s_2 + s_3=1,\\
& s_1 + s_2 >1/2, \\
 &      s_1,s_2, s_3 \ge 0, \quad s_1, s_2<1.
 \end{cases}
\end{equation}
Note that in view of \eqref{prodcond1} at most one of $s_1, s_2,
s_3$ can be $\le 0$. But by the triangle inequality in Fourier space
(i.e., Leibniz rule), we can always reduce the problem to the case
$s_1, s_2,s_3 \ge 0$. Indeed, if $s_3\le 0$, then \eqref{prodembed1}
reduces to  $$ H^{s_1+s_3, b} \cdot H^{s_2, d} \embeds L^2 \quad
\text{and} \quad H^{s_1, d} \cdot H^{s_2+s_3, d} \embeds L^2.$$  In
view of \eqref{prodembeda} these estimates hold for $s_1,s_2,s_3$
satisfying \eqref{prodcond1}. If $s_1\le 0$, then \eqref{prodembed1}
reduces to
$$ H^{0, d} \cdot H^{s_1+s_2, d} \embeds H^{-s_3,0} \quad \text{
and} \quad H^{0,d} \cdot H^{s_2, d} \embeds H^{-(s_1+s_3), 0},$$ and
again by
 \eqref{prodembeda} these hold for $s_1,s_2,s_3$
satisfying \eqref{prodcond1}.
 The case $s_2\le 0$ is symmetrical to that of $s_1 \le 0$.
\\ \\
 It remains to show  \eqref{prodembed1} for $s_1,s_2,s_3$ satisfying
 \eqref{prodcond11}. Write $s_1+s_2+s_3=1+\varepsilon$ where $\varepsilon>0$.
 We consider three cases: $s_3\le 0$, $0<s_3<1/2$ \ and \ $s_3\ge
 1/2$.\\

 \paragraph{Case 1: $s_3\le 0$} In this case (using $s_3=1+\varepsilon-s_1-s_2$), \eqref{prodembed1} reduces
 to
 $$\
H^{1+\varepsilon-s_2, d} \cdot H^{s_2, d} \embeds L^2 \quad
\text{and}\quad
 H^{s_1, d} \cdot H^{1+\varepsilon-s_1, d} \embeds L^2,
$$
which hold by Theorem \ref{prodthm1-spec} (since $s_1,s_2\ge 0$, by \eqref{prodcond11} and the assumption $s_3\le 0$).\\

 \paragraph{Case 2: $0<s_3<1/2$} Here we consider three subcases:
 $s_1\le 0$, $s_2\le 0$ and $s_1, s_2 \ge 0 $. By symmetry it suffices to
 consider $s_1\le 0$ and $s_1, s_2 \ge 0 $.\\ \\
 Assume $s_1 \le 0$; then
 (using $s_3=1+\varepsilon-s_1-s_2$) \eqref{prodembed1} reduces
 to
 \begin{align}\label{pe-case21}
 &H^{0, d} \cdot H^{1+\varepsilon-s_3, d}  \embeds H^{-s_3,0}\\
 \label{pe-case22}
 &H^{0, d} \cdot H^{1+\varepsilon-s_1-s_3, d} \embeds H^{-(s_1+s_3),
 0}.
\end{align}
Since \eqref{prodcond1} implies \eqref{prodembed1}, we have
$$ H^{0, d}
\cdot H^{1/2+\varepsilon, d} \embeds H^{-(1/2-\varepsilon),0}\embeds
H^{-1/2,0}.$$  Interpolating between this and
$$
H^{0, d} \cdot H^{1+\varepsilon, d}  \embeds L^2,
$$
with $\theta=2s_3$, gives \eqref{pe-case21} (note that $\theta\in
(0,1)$ by the assumption on $s_3$).  The same interpolation, but now
with $\theta=2(s_1+s_3)$ ($\theta \in [0,1]$ by the assumption on
$s_1$ and $s_3$), gives \eqref{pe-case22}.

 Assume next  $s_1, s_2
\ge 0 $. Choose $0\le s_1'\le s_1$, $0\le s_2'\le s_2$ such that
$s_1',s_2'<1$ and $s_1'+s_2'+s_3=1$. Indeed, we can choose such
$s_1'$ and $s_2'$ as follows: If $s_2+s_3\le 1$, take
$s_1':=1-(s_2+s_3)\in [0,1)$ and $s_2':=s_2 \in [0,1)$. If
$s_2+s_3>1$, take $s_1':=0$ and $s_2':=1-s_3\in (1/2,1).$ Then the
problem reduces to
$$
 H^{s_1', d} \cdot H^{s_2', d}
\embeds H^{-s_3,0},
$$ which holds since \eqref{prodcond1} implies \eqref{prodembed1}.\\

\paragraph{Case 3: $s_3\ge1/2$} Take $s_3'=1/2-\delta$, where $\delta>0$ is chosen such that
$s_1+s_2+s_3'>1$ (this is possible due to the assumption
$s_1+s_2>1/2$ in \eqref{prodembed1}). Then
$$
 H^{-s_3',0}\embeds H^{-s_3,0},
$$
so the problem reduces to case 2 for $s_1$,  $s_2$ and $s_3'$.
\end{proof}

We also need the following product law for the Wave Sobolev spaces.
\begin{theorem}\label{prodthm2}~\cite{s99}.
Let $t_1,t_2,t_3 \in \R$.  Then
\begin{equation}\label{prodembed2}
   H^{t_1, d_1} \cdot H^{t_2, d_2} \embeds H^{-t_3,-d_3}
\end{equation}
provided
\begin{equation}\label{prodcond2}
\begin{aligned}
      & t_1+t_2+t_3 > 3/2,\\
 &  t_1 +t_2 \ge 0, \quad  t_2+t_3 \ge 0, \quad t_1+t_3\ge 0\\
 & d_1+d_2+d_3>1/2,\\
 & d_1,d_2,d_3\ge 0.
 \end{aligned}
\end{equation}
   Moreover,  we can allow
$t_1 + t_2 + t_3 = 3/2$, provided $t_j \neq 3/2$ for $1\le j \le 3$.
Similarly, we may take $d_1 + d_2 + d_3 = 1/2$, provided $d_j \neq
1/2$ for $1 \le j \le 3$.
    \end{theorem}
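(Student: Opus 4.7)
The plan is to dualize to a symmetric trilinear form on $\R^{1+3}$, perform a dyadic decomposition in both spatial frequency and modulation (distance to the light cone), and reduce each dyadic block, via the transfer principle, to the bilinear free-wave estimates of Theorem \ref{bilhomthm}. Specifically, by duality \eqref{prodembed2} is equivalent to the symmetric trilinear bound
\begin{equation*}
 \biggabs{\int_{\R^{1+3}} u_1 u_2 u_3 \, dt\, dx} \lesssim \prod_{j=1}^{3} \norm{u_j}_{H^{t_j,d_j}},
\end{equation*}
and on setting $g_j(\tau,\xi) = \angles{\xi}^{t_j}\angles{\abs{\tau}-\abs{\xi}}^{d_j}\bigabs{\widetilde{u_j}(\tau,\xi)}$, Plancherel recasts the problem as that of bounding, by $\prod_j \norm{g_j}_{L^2}$, the integral of $g_1 g_2 g_3$ divided by the weights $\angles{\xi_j}^{t_j}\angles{\abs{\tau_j}-\abs{\xi_j}}^{d_j}$ over the hyperplane $\sum \tau_j = 0$, $\sum \xi_j = 0$.

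Next I would carry out a dyadic decomposition, writing each $g_j$ as a sum of pieces supported in $\abs{\xi_j}\sim N_j$ and $\bigabs{\abs{\tau_j}-\abs{\xi_j}}\sim L_j$, and fix signs $\epsilon_j = \sgn(\tau_j)$ selecting a sheet of the light cone. The constraint $\sum \xi_j = 0$ forces the two largest $N_j$'s to be comparable, and via the transfer principle (Lemma 4 of \cite{dfs2005}) each dyadic piece of the trilinear integral is controlled by an $L^2_{t,x}$-norm of a product of two free waves $e^{i\epsilon_j t\abs{D_x}} f_j$ with $f_j$ frequency-localized in $\abs{\xi}\sim N_j$, to which the sharp Klainerman-Machedon bounds of Theorem \ref{bilhomthm} apply. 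Combined with the standard geometric inequality $\max_j L_j \gtrsim N_{\min}\theta^2$, where $\theta$ is the transversality angle between the relevant light-cone sheets, this allows one to trade modulation weights against angular gaps and, after a case analysis on which of the $L_j$'s is largest, yields a dyadic bound that is summable precisely under the strict inequalities \eqref{prodcond2}. Whenever some $t_j<0$, the pairwise conditions $t_i + t_j \ge 0$ let one redistribute the spatial Sobolev weight via $\angles{\xi_j}\lesssim \angles{\xi_1}+\angles{\xi_2}+\angles{\xi_3}$, exactly as in the proof of Theorem \ref{prodthm1}.

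The main obstacle will be the endpoint cases $\sum t_j = 3/2$ or $\sum d_j = 1/2$, where the dyadic sum is at best logarithmically divergent. There the crude $\ell^\infty$-summation over dyadic indices has to be replaced by Cauchy-Schwarz in the index with borderline exponent; the hypotheses $t_j\neq 3/2$ and $d_j\neq 1/2$ guarantee that at least one of the remaining exponents stays strictly below the critical threshold, which absorbs the logarithm. The most delicate geometric region will be the one in which all three Fourier supports concentrate around a single null direction, so that the modulation-angle trade-off must be invoked carefully to avoid losses; this is precisely the regime where the full strength of Theorem \ref{bilhomthm} --- in particular the constraints $s_1+s_2>1/2$ and $s_1, s_2<1$ --- is required.
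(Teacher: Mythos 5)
Your overall architecture (dualize to a trilinear form, decompose dyadically in frequency and modulation, sum) is the standard one, but the central step as you state it does not work. The transfer principle (Lemma 4 of \cite{dfs2005}) upgrades an estimate for products of \emph{free waves} to an estimate in $H^{a,b}$/$X^{a,b}$ spaces only when both factors carry modulation exponent $>1/2$; it does not produce bounds for individual dyadic pieces localized to modulation $\sim L_j$. If you force such a reduction by writing a piece with modulation $\lesssim L$ as a superposition of free waves, Cauchy--Schwarz costs a factor $L^{1/2}$ per factor. But in Theorem \ref{prodthm2} the individual $d_j$ may be far below $1/2$ --- two of them may even vanish, and the genuinely hard regime the theorem is meant to cover is, say, $d_1=d_2=d_3$ slightly above $1/6$ --- so the available weights $L_1^{d_1}L_2^{d_2}L_3^{d_3}$ cannot absorb these losses, and the dyadic sum you assert to be ``summable precisely under \eqref{prodcond2}'' diverges. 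To run a dyadic proof one needs genuinely bilinear estimates for modulation-localized (thickened-cone) pieces with the sharp dependence on $L_{\min}$, $L_{\mathrm{med}}$ and $N_{\min}$, which is strictly more than ``transfer principle plus Theorem \ref{bilhomthm}.'' Two further points: \eqref{prodcond2} allows configurations such as $t_1>3/2$, $t_2=t_3=0$, which your reduction would feed into Theorem \ref{bilhomthm} outside its admissible range $s_1,s_2<1$ (your Leibniz redistribution fixes negative exponents but not exponents above the Klainerman--Machedon range); and the endpoint cases $\sum t_j=3/2$, $\sum d_j=1/2$ are not settled by ``Cauchy--Schwarz in the dyadic index'' as described --- this is precisely where the exceptions $t_j\neq 3/2$, $d_j\neq 1/2$ enter through a finer argument.

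For comparison, the paper does not reprove the core estimate at all: it observes that at most one $t_j$ can be negative, reduces to $t_1,t_2,t_3\ge 0$ by the same Leibniz-rule trick you mention for negative exponents, and then cites \cite[Proposition 10]{s99} for that case. So your attempt is more ambitious than what the paper does, but in its present form the key dyadic estimate underlying your summation is unproved, and it is exactly the content of the cited result.
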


\begin{proof}
In view of \eqref{prodcond2}, at most one of $t_1, t_2, t_3$ can be
negative. But by the same Leibniz rule as in the proof of Theorem
\ref{prodthm1} this can be reduced to the case $t_1, t_2, t_3 \ge
0$, which was proved in \cite[Proposition ~10]{s99}.
\end{proof}

\begin{theorem}\label{speprodthm3} Let $\epsilon>0$. Then
\begin{equation}\label{spemb}
   H^{1/2+\epsilon, {1/2}^+ } \cdot H^{ \epsilon, {1/2}^+ } \embeds
H^{-1+\epsilon, 1/2} .
\end{equation}
    \end{theorem}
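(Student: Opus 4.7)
The difficulty with the estimate \eqref{spemb} is that its target space $H^{-1+\epsilon,1/2}$ carries a \emph{positive} hyperbolic weight $1/2$, whereas Theorems \ref{prodthm1} and \ref{prodthm2} only produce outputs with hyperbolic weight at most zero. My plan is therefore to transfer the factor $\angles{|\tau|-|\xi|}^{1/2}$ from the output onto one of the inputs via the pointwise bound
\begin{equation*}
\angles{|\tau|-|\xi|}^{1/2}\lesssim \angles{\sigma_1}^{1/2}+\angles{\sigma_2}^{1/2}+\angles{\xi_1}^{1/2}+\angles{\xi_2}^{1/2},
\end{equation*}
where $\sigma_j=|\tau_j|-|\xi_j|$ and $\tau=\tau_1+\tau_2$, $\xi=\xi_1+\xi_2$. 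This follows from the chain $|\sigma|\le|\tau|+|\xi|\le 2(|\xi_1|+|\xi_2|)+|\sigma_1|+|\sigma_2|$, using $|\tau_j|\le|\xi_j|+|\sigma_j|$.

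I would then split the Fourier convolution defining $uv$ into four regions, indexed by which of the four terms above dominates. In the two regions where $\angles{\sigma_j}^{1/2}$ is dominant, the weight is absorbed into the hyperbolic exponent of the corresponding input; for instance, in the region where $\angles{\sigma_1}^{1/2}$ dominates the estimate reduces to
\begin{equation*}
H^{1/2+\epsilon,0^+}\cdot H^{\epsilon,1/2^+}\embeds H^{-1+\epsilon,0},
\end{equation*}
which follows from Theorem \ref{prodthm2} with $(t_1,t_2,t_3)=(1/2+\epsilon,\epsilon,1-\epsilon)$ (so $\sum t_j=3/2+\epsilon>3/2$ and all pairwise sums are nonnegative) and $(d_1,d_2,d_3)=(0^+,1/2^+,0)$ (so $\sum d_j>1/2$). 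The region where $\angles{\sigma_2}^{1/2}$ dominates is handled symmetrically.

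The main obstacle I expect is in the two regions where $\angles{\xi_j}^{1/2}$ dominates. A naive absorption of $\angles{\xi_j}^{1/2}$ into the Sobolev exponent of the corresponding input would produce reductions such as $H^{\epsilon,1/2^+}\cdot H^{\epsilon,1/2^+}\embeds H^{-1+\epsilon,0}$, which fall outside the range of Theorems \ref{prodthm1} and \ref{prodthm2} since $s_1+s_2=2\epsilon<1/2$. To close the argument here one must instead use the sharper identity
\begin{equation*}
\sigma-\sigma_1-\sigma_2\;=\;|\xi_1|+|\xi_2|-|\xi|
\end{equation*}
valid in the same-sign case (with analogous identities in the mixed-sign cases), whose right-hand side is bounded by $2\min(|\xi_1|,|\xi_2|)$ rather than by $\max(|\xi_1|,|\xi_2|)$. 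Combined with an additional dyadic decomposition in the frequencies $|\xi_j|$ and, where needed, the angle between $\xi_1$ and $\xi_2$, and followed by a direct appeal to the Foschi--Klainerman bilinear estimates of Theorem \ref{bilhomthm} through the transfer principle, this finer resonance-aware analysis is what I expect to be the essential technical content needed to handle the regions missed by the naive transfer.
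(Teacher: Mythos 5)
Your overall strategy is the right one, and in fact it mirrors the paper's: transfer the output weight $\angles{\abs{\tau}-\abs{\xi}}^{1/2}$ onto the inputs plus a "resonance" term (this is exactly the hyperbolic Leibniz rule of Klainerman--Selberg that the paper invokes), handle the two input-weight regions by Theorem \ref{prodthm2} (your reductions there are correct), and treat the remaining region using the identity $\sigma-\sigma_1-\sigma_2=\abs{\xi_1}+\abs{\xi_2}-\abs{\xi}$ in the same-sign case, with the mixed-sign analogue $\abs{\xi}-\bigabs{\abs{\xi_1}-\abs{\xi_2}}$. However, precisely at this last region your argument has a genuine gap: you do not prove the needed estimate, you only announce that a dyadic and angular decomposition followed by "a direct appeal to the Foschi--Klainerman bilinear estimates of Theorem \ref{bilhomthm} through the transfer principle" should close it. Theorem \ref{bilhomthm} cannot close it. If you bound the resonance weight by $\bigl(2\min(\abs{\xi_1},\abs{\xi_2})\bigr)^{1/2}$ and absorb it into the corresponding input, the resulting free-wave estimate has $s_1+s_2=2\epsilon$ (for instance $s_1=\epsilon$, $s_2=\epsilon$, $s_3=1-\epsilon$), violating the necessary condition $s_1+s_2>1/2$ in \eqref{prodconda}; so no amount of rearranging exponents within Theorem \ref{bilhomthm} recovers the estimate, and the angular gain you allude to is simply not contained in that theorem.

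What is actually needed here is the genuinely stronger null-form estimate in which the weight $D_-^{1/2}$ is kept intact on the product of free waves, namely
\begin{equation*}
\norm{\abs{D_x}^{-1+\epsilon}D_-^{1/2}(u v)}_{L^2}\lesssim
\norm{\abs{D_x}^{1/2+\epsilon/2} u_0}_{L^2}\,\norm{\abs{D_x}^{\epsilon/2} v_0 }_{L^2},
\end{equation*}
which is Theorem 1.1 of Foschi--Klainerman \cite{fs2000}; this (after the transfer principle) is exactly how the paper disposes of the third piece. Your sketch of "dyadic decomposition in $\abs{\xi_j}$ plus angular sectors" is essentially a proposal to reprove that theorem from scratch using the refined dyadic/angular bilinear estimates of \cite{fs2000}, which is a substantial argument you have not carried out and which, as you yourself write, is "the essential technical content" of the claim. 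So: correct framework and correct identification of the obstruction, but the heart of the proof is missing; to repair it, replace the appeal to Theorem \ref{bilhomthm} by an appeal to Theorem 1.1 of \cite{fs2000} (or supply the full angular-decomposition argument).
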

\begin{proof} The embedding \eqref{spemb} is equivalent to the estimate
$$
I\lesssim \norm{u}_{L^2(\R^{1+3})}\norm{u}_{L^2(\R^{1+3})},
$$
where
  $$
   I=\norm{  \int_{\R^{1+3}} \frac{ \angles{ \abs{\tau}-\abs{\xi} }^{1/2}
\widetilde u(\lambda,\eta) \widetilde v(\tau-\lambda, \xi-\eta)}
   { \angles{\xi}^{1-\epsilon}  \angles{\eta}^{1/2+\epsilon}  \angles{
\xi-\eta }^{\epsilon} \angles{ \abs{\lambda}-\abs{\eta} }^{{1/2
}^+}\angles{ \abs{ \tau-\lambda}-\abs{\xi-\eta} }^{{1/2}^+}}d\lambda
d\eta  }_{L^2_{(\tau,\xi)}}.$$
   By the 'hyperbolic' Leibniz rule (see ~\cite{ks2002} lemma 3.2), we reduce this
   to three estimates
\begin{align*}
& H^{1/2+\varepsilon, 0} \cdot H^{\varepsilon, {1/2}^+} \embeds H^{-1+\varepsilon, 0}, \\
& H^{1/2+\varepsilon, {1/2}^+} \cdot H^{\varepsilon, 0} \embeds
H^{-1+\varepsilon, 0},
\end{align*}
and (using also transfer principle to one free wave estimate)
\begin{equation*}
\norm{\abs{D_x}^{{-1+\epsilon}}D_-^{1/2}(u v)}_{L^2}\lesssim
\norm{\abs{D_x}^{1/2+\epsilon/2} u_0}_{  L^2}
\norm{\abs{D_x}^{\epsilon/2} v_0 }_{L^2},
 \end{equation*} where $u=e^{\pm
it\abs{D_x}}u_0$ and $v=e^{\pm it\abs{D_x}}v_0$, and the operator
$D_-$ corresponds to the symbol
 $\abs{\abs{\tau}-\abs{\xi}}$. The first two estimates hold by Theorem \ref{prodthm2}, and the last estimate holds by Theorem 1.1 in
 ~\cite{fs2000}.
\end{proof}

\section{Interpolation results}
By bilinear interpolation between special cases of Theorems
\ref{prodthm1} and \ref{prodthm2}, and at one point Theorem
\ref{speprodthm3}, we obtain a series of estimates which will be
useful in the proof of Theorem \ref{Reducedthm}. For $a,b,c, \alpha,
\beta, \gamma \in \R$, and $\epsilon>0$ sufficiently small, we
obtain the following estimates (the proof is given below):

\begin{equation}
\label{bie1} H^{a, \alpha } \cdot H^{0, {1/2}^+} \embeds H^{-c, 0}
\quad
\text{if} \quad  \begin{cases} & a, c, \alpha\ge 0,\\
 & 3\min(a/2,\alpha)+ c>3/2.\\
\end{cases}
\end{equation}

\begin{equation}
\label{bie1e} H^{a, \alpha } \cdot H^{0, {1/2}^+} \embeds H^{-c, 0}
\quad \text{if} \quad  \begin{cases}
&a, \alpha\ge 0, \ c\ge 1/2,\\
& \min(a,\alpha)+ c/2>3/4.\\
\end{cases}
\end{equation}

\begin{equation}\label{bie2}
H^{a, \alpha } \cdot H^{0, \beta} \embeds H^{0, -\gamma} \quad
\text{if} \quad  \begin{cases} \ \ &a>1, \ \alpha> 0, \ \beta, \gamma\ge 0, \\
& a+ \min(\alpha, \beta)>3/2,\\
& \gamma + \min(\alpha, \beta)>1/2.
\end{cases}
\end{equation}
\begin{equation}\label{bie3}
H^{a, {1/2}^+ } \cdot H^{b, \beta} \embeds H^{-c, 0} \quad
\text{if}\quad
\begin{cases} \ &c,\beta\ge 0, \ a,b>0,\\
 & a+b=1, \\
 &   c+\beta>1/2.
\end{cases}
\end{equation}
\begin{equation}\label{bie4}
H^{1, {1/2}^+ } \cdot H^{0, \beta} \embeds H^{-c, 0} \quad
\text{if}\quad
\begin{cases} &\beta\ge 0, \ c>0, \\
   &c+\beta>1/2.
\end{cases}
\end{equation}
\begin{equation}\label{bie5}
H^{a, \alpha } \cdot H^{b, {1/2}^+} \embeds H^{-c, 0} \quad
\text{if}\quad
\begin{cases} &a, b,\alpha \ge 0, \ c \ge 1/2, \\
&\min(a,\alpha)+ 2b/3>1/2,\\
&\min(a,\alpha)+ 2c>3/2.
\end{cases}
\end{equation}
\begin{equation}\label{bie6}
H^{a, {1/2}^+ } \cdot H^{b, \beta} \embeds L^2 \quad \text{if}\quad
 \begin{cases}  &  b, \beta \ge 0, \ a \ge 1/2, \\
&a+ 2\min(b,\beta)>3/2.
\end{cases}
\end{equation}
\begin{equation}\label{bie7}
H^{a, {1/2}^+ } \cdot H^{1/2, \beta} \embeds L^2 \quad
\text{if}\quad
\begin{cases} &  \beta \ge 0, \ a \ge 1/2, \\
&a+ \beta>1.
\end{cases}
\end{equation}
\begin{equation}\label{bie8}
H^{a, {1/2}^+ } \cdot H^{\epsilon, \beta} \embeds H^{-1+\epsilon,
-\gamma} \quad  \text{if}\quad \begin{cases}
&a,\beta\ge 0, \ \gamma\ge -1/2,\\
&\min(a,\beta)+\gamma/2>1/4.
\end{cases}
\end{equation}
\begin{equation}\label{bie9}
H^{1/2, {1/2}^+ } \cdot H^{0, \beta} \embeds H^{-c, 0} \quad
\text{if}\quad  \begin{cases}
 & \beta \ge 0, \ c > 1/2, \\
&c+\beta>1.
\end{cases}
\end{equation}
\begin{proof}[Proof of \eqref{bie1}--\eqref{bie9}]
The parameter $\varepsilon>0$ is assumed to be sufficiently small.

To prove \eqref{bie1} we interpolate between
\begin{align*}
H^{1+\varepsilon, 1/2+\varepsilon} \cdot H^{0, {1/2}^+} &\embeds L^2,\\
L^2 \cdot H^{0, {1/2}^+} & \embeds H^{-(3/2+\varepsilon), 0}.
\end{align*}
This gives
$$H^{(1+\varepsilon)(1-\theta),
(1/2+\varepsilon)(1-\theta)} \cdot H^{0, {1/2}^+} \embeds
H^{-(3/2+\varepsilon)\theta, 0}$$ for $\theta\in[0,1]$. Now, if
there exists $\theta\in[0,1]$ such that $a\ge
(1+\varepsilon)(1-\theta)$ \ $\left(\Leftrightarrow \theta \ge
1-a/(1+\varepsilon) \right)$, \ $\alpha \ge
(1/2+\varepsilon)(1-\theta)$ \ $\left(\Leftrightarrow \theta \ge
1-2\alpha/(1+2\varepsilon) \right)$ and $c\ge
(3/2+\varepsilon)\theta$ \ $\left(\Leftrightarrow \theta \le
2c/(3+2\varepsilon) \right)$, then we have $H^{a, \alpha } \cdot
H^{0, {1/2}^+} \embeds H^{-c, 0}.$ But such a $\theta\in[0,1]$
exists if $a,\alpha, c\ge 0$, $3a+2c\ge
3+5\varepsilon-2\varepsilon(a+c)+2\varepsilon^2$ and $2c+6\alpha\ge
3+8\varepsilon-2\varepsilon(c+\alpha)+4\varepsilon^2$. Since
$\varepsilon>0$ is very small, it is enough to have $a,\alpha,c\ge
0$, $3a+2c>3$ and $2c+6\alpha>3$. This proves \eqref{bie1}.
Interpolation between
\begin{align*}
H^{1/2+\varepsilon, 1/2+\varepsilon} \cdot H^{0, {1/2}^+} &\embeds H^{-(1/2+\varepsilon), 0},\\
L^2 \cdot H^{0, {1/2}^+} & \embeds H^{-(3/2+\varepsilon), 0},
\end{align*}
  with a similar
argument as above, proves  \eqref{bie1e}.

To prove \eqref{bie2}, we interpolate between
\begin{align*}
H^{1+\varepsilon, 1/2+\varepsilon} \cdot H^{0, 1/2+\varepsilon} &\embeds L^2,\\
H^{3/2+\varepsilon, \varepsilon} \cdot L^2 &\embeds H^{0,
-(1/2-\varepsilon)}.
\end{align*}
This gives
\begin{equation*}
H^{(1+\varepsilon)(1-\theta)+( 3/2+\varepsilon)\theta,
(1/2+\varepsilon)(1-\theta)+\varepsilon\theta } \cdot H^{0,(
1/2+\varepsilon)(1-\theta)} \embeds H^{0, -(1/2-\varepsilon)\theta},
\end{equation*}
for $\theta\in[0,1]$. If there exists $\theta\in[0,1]$ such that
$a\ge (1+\varepsilon)(1-\theta)+ ( 3/2+\varepsilon)\theta$, $\alpha
\ge (1/2+\varepsilon)(1-\theta)+ \varepsilon\theta$, $\beta\ge(
1/2+\varepsilon)(1-\theta) $ \ and $\gamma \ge
(1/2-\varepsilon)\theta$ , then we have
$$H^{a, \alpha }\cdot H^{0, \beta} \embeds H^{0, -\gamma}. $$
By a similar argument as in the proof of \eqref{bie1}, such a
$\theta\in[0,1]$ exists if $a>1$, $\alpha>0$, $\beta, \gamma\ge 0$,
$a+\alpha>3/2$, $a+\beta>3/2$, $\alpha +\gamma>1/2$ and
$\beta+\gamma>1/2$. This proves \eqref{bie2}.

To prove \eqref{bie3}, we interpolate between
\begin{align*}
H^{a, {1/2}^+} \cdot H^{b, 1/2+\varepsilon} &\embeds L^2,\\
H^{a, {1/2}^+} \cdot H^{b, 0} &\embeds H^{-1/2, 0},
\end{align*}
which both hold true if $a+b=1$, $a,b>0$, by Theorems \ref{prodthm1}
and \ref{prodthm2}, respectively. This gives
$$ H^{a, {1/2}^+} \cdot H^{b, ( 1/2+\varepsilon)(1-\theta)}\embeds H^{-\theta/2, 0}
$$ for $\theta\in[0,1]$. If there exists
$\theta\in[0,1]$ such that $\beta\ge ( 1/2+\varepsilon)(1-\theta)$
and $c\ge \theta/2$, then we have
$$H^{a, {1/2}^+} \cdot H^{b, \beta}\embeds H^{-c, 0}, $$ for $a+b= 1$, $a,b>0$.
By a similar argument as before such a $\theta\in[0,1]$ exists if
$\beta, c\ge 0$ and $c+\beta>1/2$.

For \eqref{bie4}--\eqref{bie9}, similar arguments as in the proof of
\eqref{bie1} are used, so we only give the interpolation pairs,
which give the desired estimate when interpolated.

For \eqref{bie4}, we use
\begin{align*}
H^{1, {1/2}^+} \cdot H^{0, 1/2+\varepsilon} &\embeds H^{-\varepsilon, 0},\\
H^{1, {1/2}^+} \cdot L^2 &\embeds H^{-1/2, 0}.
\end{align*}
 For \eqref{bie5},
we interpolate between
\begin{align*}
H^{1/2+\varepsilon, 1/2+\varepsilon} \cdot H^{0, {1/2}^+} &\embeds H^{-(1/2-\varepsilon),0},\\
L^2 \cdot H^{3/4, {1/2}^+} &\embeds H^{-3/4, 0}.
\end{align*}

 For \eqref{bie6},
we interpolate between
\begin{align*}
H^{1/2, {1/2}^+} \cdot H^{1/2, 1/2+\varepsilon} &\embeds L^2,\\
 H^{3/2 +\varepsilon, {1/2}^+} \cdot L^2 &\embeds L^2.
\end{align*}

For \eqref{bie7}, we interpolate between
\begin{align*}
H^{1/2, {1/2}^+} \cdot H^{1/2, 1/2+\varepsilon} &\embeds L^2,\\
 H^{1, {1/2}^+} \cdot H^{1/2,0} &\embeds L^2.
\end{align*}

For \eqref{bie8}, we interpolate between
\begin{align*}
 H^{0, {1/2}^+} \cdot H^{\varepsilon,0} &\embeds
 H^{-(1-\varepsilon),-(1/2+\varepsilon)},\\
 H^{1/2+\varepsilon, {1/2}^+} \cdot H^{\varepsilon, 1/2+\varepsilon} &\embeds H^{-(1-\varepsilon),
 1/2},
 \end{align*}
where the second embedding holds by Theorem \ref{speprodthm3}.

For \eqref{bie9}, we interpolate between
\begin{align*}
H^{1/2, {1/2}^+} \cdot H^{0, 1/2+\varepsilon} &\embeds H^{-(1/2+\varepsilon), 0},\\
 H^{1/2, {1/2}^+} \cdot L^2 &\embeds H^{-1,0},
\end{align*}
where the first embedding does not directly follow from Theorems
\ref{prodthm1} and \ref{prodthm2}, but from interpolation between
\begin{align*}
H^{1/2+\varepsilon, {1/2}^+} \cdot H^{0, 1/2+\varepsilon} &\embeds H^{-(1/2-\varepsilon), 0},\\
 H^{0, {1/2}^+} \cdot H^{0, 1/2+\varepsilon}&\embeds
 H^{-(3/2+\varepsilon),0},
\end{align*}
which gives
$$ H^{(1/2+\varepsilon)(1-\theta), {1/2}^+} \cdot H^{0, 1/2+\varepsilon} \embeds H^{-(1/2-\varepsilon)(1-\theta)-(3/2+\varepsilon)\theta, 0}
$$ for $\theta\in [0,1]$. Choosing
$\theta=\frac{2\varepsilon}{1+2\varepsilon}$ gives the desired
estimate.

\end{proof}

In the following two sections, we shall present the proof of the
bilinear estimates \eqref{Bilinear-DiracD} and \eqref{Bilinear-KG}
for all $\psi,\psi' \in \mathcal S(\R^{1+3})$ provided $(r,s)$,
$\rho$ and $\sigma$ are as in \eqref{condrs}, \eqref{condrho} and
\eqref{condsigma} respectively. These will imply Theorem
\ref{Reducedthm}. First we prove \eqref{Bilinear-KG}, and then
\eqref{Bilinear-DiracD}. Note that using \eqref{embxh} we can reduce
$X^{s,b}$ type estimates to $H^{s,b}$ type estimates, which we shall
do in the following two sections.

\section{Proof of \eqref{Bilinear-KG}}\label{proofBilinear-KG}
 Without loss of generality
we take $[\pm] = +$. Assume $\psi,\psi' \in \mathcal S(\R^{1+3})$ .
Using \eqref{NullFormEstimate}, we can reduce \eqref{Bilinear-KG}
(write $\rho=1/2+\varepsilon$, as in \eqref{condrho}) to
$$
  I^{\pm}
  \lesssim \norm{\psi}_{{X_+^{s,\sigma}}}
  \norm{\psi'}_{X_\pm^{s,\sigma}},
$$
where
$$
  I^{\pm} = \norm{\int_{\R^{1+3}} \frac{ \theta_\pm}{
\angles{\xi}^{1-r}\angles{\abs{\tau}-\abs{\xi}}^{1/2-2\varepsilon}}
\abs{\widetilde \psi(\lambda,\eta)}
  \abs{\widetilde \psi'(\lambda-\tau,\eta-\xi)} \, d\lambda \,
d\eta}_{L^2_{\tau,\xi}},
$$
  and $$
  \theta_\pm = \vangle\bigl(\eta,\pm(\eta-\xi)\bigr).$$\\
The low frequency case, where
$\min(\fixedabs{\eta},\fixedabs{\eta-\xi}) \le 1$ in $I^{\pm}$,
follows from a similar argument as in ~\cite{dfs2006}, and hence we
do not consider this question here. From now on we assume that in
$I^{\pm}$,
\begin{equation}\label{FrequencySupport}
  \fixedabs{\eta}, \fixedabs{\eta-\xi} \ge 1.
\end{equation}
We shall use the following notation in order to make expressions
manageable:
\begin{gather*}
  F(\lambda,\eta) = \angles{\eta}^s \angles{\lambda+\fixedabs{\eta}}^{\sigma}
  \abs{\widetilde \psi(\lambda,\eta)},
  \qquad
  G_\pm(\lambda,\eta) = \angles{\eta}^{s}
\angles{\lambda\pm\fixedabs{\eta}}^{\sigma}
  \abs{\widetilde \psi'(\lambda,\eta)},
  \\
  \Gamma = \abs{\tau}-\abs{\xi},
  \qquad \Theta = \lambda+\fixedabs{\eta},
  \qquad \Sigma_\pm = \lambda-\tau\pm\fixedabs{\eta-\xi},
  \\
  \kappa_{+} = \abs{\xi} - \bigabs{ \fixedabs{\eta} - \fixedabs{\eta-\xi}},
  \qquad
  \kappa_{-} = \fixedabs{\eta} + \fixedabs{\eta-\xi} - \abs{\xi}.
\end{gather*}
 We shall need the estimates (see ~\cite{dfs2005}):
\begin{equation}\label{ThetaEstimates}
  \theta_{+}^2 \sim \frac{\abs{\xi}
  \kappa_{+}}{\fixedabs{\eta}\fixedabs{\eta-\xi}},
  \qquad
  \theta_{-}^2 \sim \frac{(\fixedabs{\eta}+\fixedabs{\eta-\xi})
  \kappa_{-}}{\fixedabs{\eta}\fixedabs{\eta-\xi}}
  \sim \frac{\kappa_-}{\min(\fixedabs{\eta},\fixedabs{\eta-\xi})}.
\end{equation}
\begin{align}
  \label{rEstimateA}
  \kappa_{\pm} &\le 2 \min(\fixedabs{\eta},\fixedabs{\eta-\xi}),
  \\
  \label{rEstimateB}
  \kappa_{\pm} &\le \abs{\Gamma} + \abs{\Theta} + \abs{\Sigma_\pm}.
\end{align}

\subsection{Estimate for $I^+$} By \eqref{ThetaEstimates}, and
using \eqref{FrequencySupport}
$$
  I^+ \lesssim
  \norm{\int_{\R^{1+3}} \frac{\kappa_+^{1/2} F(\lambda,\eta)
G_+(\lambda-\tau,\eta-\xi)}
  { \angles{\xi}^{1/2-r}\angles{\eta}^{1/2+s} \angles{\eta-\xi}^{1/2+s}
\angles{\Gamma}^{1/2-2\varepsilon}
   \angles{\Theta}^{\sigma} \angles{\Sigma_+}^{\sigma}} \, d\lambda \,
d\eta}_{L^2_{\tau,\xi}}.
$$
By \eqref{rEstimateA} and \eqref{rEstimateB}
$$\kappa_+^{1/2} \lesssim \abs{\Gamma}^{1/2-2\varepsilon}
\min(\fixedabs{\eta},\fixedabs{\eta-\xi})^{2\varepsilon} +
\abs{\Theta}^{1/2} + \abs{\Sigma_+}^{1/2}.$$ Moreover, by symmetry
we may assume $\fixedabs{\eta}\ge\fixedabs{\eta-\xi}$ in $I^+$. By
\eqref{condrs}, $r> 1/2$, so we have by the triangle inequality
\begin{equation}\label{Lebz}
\angles{\xi}^{r-1/2}\lesssim
\angles{\eta}^{r-1/2}+\angles{\eta-\xi}^{r-1/2}\lesssim
\angles{\eta}^{r-1/2}.
\end{equation}
Hence the estimate reduces to
$$
  I_j^+ \lesssim \norm{F}_{L^2}
  \norm{G_+}_{L^2}, \ \ j=1,2, 3 ,
$$
where
\begin{align*}
  I^+_1 &= \norm{\int_{\R^{1+3} }
  \frac{ F(\lambda,\eta) G_+(\lambda-\tau,\eta-\xi)}
  {\angles{\eta}^{1+s-r}
\angles{\eta-\xi}^{1/2+s-2\varepsilon}
   \angles{\Theta}^{\sigma} \angles{\Sigma_+}^{\sigma}} \, d\lambda \,
d\eta}_{L^2_{\tau,\xi}},
\\
  I^+_2 &= \norm{\int_{\R^{1+3} }
  \frac{ F(\lambda,\eta) G_+(\lambda-\tau,\eta-\xi)}
  {  \angles{\eta}^{1+s-r} \angles{\eta-\xi}^{1/2+s}
  \angles{\Gamma}^{1/2-2\varepsilon} \angles{\Theta}^{\sigma-1/2}
\angles{\Sigma_+}^{\sigma} } \, d\lambda \, d\eta}_{L^2_{\tau,\xi}},
\\
  I^+_3 &= \norm{\int_{\R^{1+3} }
  \frac{ F(\lambda,\eta) G_+(\lambda-\tau,\eta-\xi)}
  {\angles{\eta}^{1+s-r}
\angles{\eta-\xi}^{1/2+s}
   \angles{\Gamma}^{1/2-2\varepsilon}\angles{\Theta}^{\sigma}
\angles{\Sigma_+}^{\sigma-1/2}} \, d\lambda \,
d\eta}_{L^2_{\tau,\xi}}.
\end{align*}

\subsubsection{  Estimate for $I^+_1$ }\label{EstI1+ }
The problem reduces to
$$
H^{1+s-r, \sigma} \cdot H^{s +1/2-2\varepsilon, \sigma} \embeds
  L^2,
$$
which holds by Theorem \ref{prodthm1} for all $1/2<\sigma<1$
provided the conditions
$$s> -1/2 \quad \ r< 1/2+ 2s \quad \text{and} \quad  r\le 1+s$$ are
satisfied, which they are by \eqref{condrs}, and provided also that
$\varepsilon>0$ is sufficiently small, which is tacitly assumed in
the following discussion.

\subsubsection{ Estimate for $I^+_2$ }\label{EstI2+ }
  We assume that $ \abs{\Gamma} \lesssim
\min(\abs{\eta}, \abs{\eta-\xi})= \abs{\eta-\xi}$, since otherwise
$I^{+}$ reduces to $I^{+}_1$ in view of \eqref{rEstimateA}. Giving
up the weight $\angles{\Theta}^{-\sigma+1/2}$ in the integral, we
get
$$
  I^+_2 \lesssim \norm{\int_{\R^{1+3} }
  \frac{ F(\lambda,\eta) G_+(\lambda-\tau,\eta-\xi)}
  {\angles{\eta}^{1+s-r}
\angles{\eta-\xi}^{1/2+s-3\varepsilon}
   \angles{\Sigma_+}^{\sigma} \angles{\Gamma}^{1/2+\varepsilon}} \,
d\lambda \, d\eta}_{L^2_{\tau,\xi}}.
$$
Then the problem reduces to
$$
   H^{1+s-r, 0} \cdot H^{1/2+s-3\varepsilon, \sigma} \embeds
   H^{0, -1/2-\varepsilon}.
$$ But by duality this is equivalent to the embedding
$$
  H^{0, 1/2+\varepsilon} \cdot  H^{1/2+s-3\varepsilon, \sigma} \embeds
     H^{-1-s+r, 0 },
$$
which holds by Theorem \ref{prodthm1} for all $1/2<\sigma<1$
provided
$$
s>0 , \quad   r < 1/2+2s \quad \text{and} \quad  r\le 1+s, $$ which
are true by \eqref{condrs}.

\subsubsection{ Estimate for $I^+_3$ }\label{EstI3+ }
 By the same argument as for $I^+_2$, we assume $\abs{\Gamma}\lesssim
\min(\abs{\eta}, \abs{\eta-\xi})=\abs{\eta-\xi}$. Then
$$
 I^+_3 \lesssim \norm{ \int_{\R^{1+3} }
  \frac{ F(\lambda,\eta) G_+(\lambda-\tau,\eta-\xi)}
  { \angles{\eta}^{1+s-r}
\angles{\eta-\xi}^{1/2+s-3\varepsilon}
   \angles{\Gamma}^{1/2+\varepsilon}\angles{\Theta}^{\sigma}
\angles{\Sigma_+}^{\sigma-1/2}} \, d\lambda \,
d\eta}_{L^2_{\tau,\xi}}.
$$
Hence the problem reduces to proving
\begin{equation}\label{EmbedI+30}
  H^{1+s-r,\sigma} \cdot H^{1/2+s-3\varepsilon, \sigma -1/2} \embeds
H^{0, -1/2-\varepsilon} .
\end{equation}
By duality this is equivalent to the embedding
\begin{equation}\label{EmbedI+3}
   H^{1+s-r,\sigma} \cdot  H^{0, 1/2+\varepsilon} \embeds
H^{-1/2-s+3\varepsilon, -\sigma + 1/2},
\end{equation}
which holds by Theorem \ref{prodthm1} if
 $ s>-1/2$ and $ r < \min( 1/2+2s, 1/2+s)$.  But $s> 0$ by \eqref{condrs},
so \eqref{EmbedI+3} holds for
  $$ r <
1/2+s, \  \text { and \ all } \ \
  1/2<\sigma<1.$$

   If $s > 1$ \ and \ $r \le 1+s$ (see figure \ref{fig:1}), then
\eqref{EmbedI+3} reduces to
$$
H^{0,\sigma} \cdot  H^{0, 1/2+\varepsilon} \embeds
H^{-1/2-s+3\varepsilon, -\sigma + 1/2},
$$ which is true by Theorem \ref{prodthm2} for all
   $1/2<\sigma<1$ .
   If $s > 1/2$ \ and \ $r = 1/2+s$ (this includes $(s,r)\in DF \cup F$,
see figure \ref{fig:1}),  then \eqref{EmbedI+3} becomes
$$
H^{1/2,\sigma} \cdot  H^{0, 1/2+\varepsilon} \embeds
H^{-1/2-s+3\varepsilon, -\sigma + 1/2},
$$ which is true by Theorem \ref{prodthm2} for all
   $1/2<\sigma<1$ .\\

 It remains to prove \eqref{EmbedI+3} for (see figure
\ref{fig:1})  $$ (s, r)\in  D \cup AD \cup BD \cup R_2 \cup R_4.$$
To do this, we need special choices of $\sigma$ which will depend on
$s$ and $r$ as in \eqref{condsigma}. We shall consider five cases
based on these regions. In the rest of the paper, $\theta\in[0,1]$
is an interpolation parameter, $\varrho>0$ depends on $s$ and $r$,
and $\varepsilon, \delta>0$ will be chosen sufficiently small,
depending on $\varrho$. We may also assume that $\varrho
\gg\delta\gg\varepsilon$.
\\
\paragraph{Case 1: $( s, r) \in R_2 $} Then according to \eqref{condsigma} we choose
$\sigma=1/2+s$ (note that $1/2<\sigma<1$, since $0<s<1/2$ in this
region). Write $r=1/2+2s-\varrho$; Then \eqref{EmbedI+3} becomes
\begin{equation}\label{EmbedI+31}
   H^{1/2-s+\varrho,1/2+s} \cdot  H^{0, 1/2+\varepsilon} \embeds
H^{-1/2-s+3\varepsilon, -s} .
\end{equation}
At $s=\delta$, \eqref{EmbedI+31} becomes
  \begin{equation}\label{EmbedI+32}
 H^{1/2-\delta+\varrho,1/2+\delta} \cdot  H^{0, 1/2+\varepsilon}
\embeds  H^{-1/2-\delta+3\varepsilon, -\delta},
  \end{equation} which holds by Theorem \ref{prodthm1}.
   At $s=1/2-\delta$, \eqref{EmbedI+31}
becomes
\begin{equation}\label{EmbedI+33}
 H^{\delta+\varrho,1-\delta} \cdot H^{0,
 1/2+\varepsilon}\embeds   H^{-1+\delta+3\varepsilon, -1/2+\delta}.
  \end{equation} By duality this equivalent to
 $$
  H^{1-\delta-3\varepsilon, 1/2-\delta}\cdot H^{0,
 1/2+\varepsilon}\embeds   H^{-\delta-\varrho,-1+\delta},
 $$   which is true by \eqref{bie1}.
  Now, interpolation
 between  \eqref{EmbedI+32} and \eqref{EmbedI+33} with
$ \theta=\frac{2(s-\delta)}{1-4\delta}$  (note that $0\le \theta \le
 1$ whenever $\delta\le s\le 1/2-\delta$) gives \eqref{EmbedI+31}.
 \\
\paragraph{Case 2: $( s, r) \in  AD$}
 Here $0<s<1/2$, $r=1/2+s$. According to
\eqref{condsigma} we choose
 $\sigma=1/2+s/3$. Then
 \eqref{EmbedI+3} becomes
$$
   H^{1/2,1/2+s/3} \cdot  H^{0, 1/2+\varepsilon} \embeds
H^{-1/2-s+3\varepsilon, -s/3} ,
$$
which holds by \eqref{bie1e} for $s\ge \delta$.\\

\paragraph{Case 3: $ (s,r) \in  R_4 $} By \eqref{condsigma}, we choose
$\sigma=3/2-s+4\varepsilon$ . Since $r\ge1+s$, \eqref{EmbedI+3}
reduces to (using also duality)
$$
 H^{1/2+s-3\varepsilon, 1-s+4\varepsilon}    \cdot  H^{0, 1/2+\varepsilon}
\embeds H^{0,-3/2+s-4\varepsilon} ,
$$ which holds by \eqref{bie2} for $1/2<s\le 1$.\\

\paragraph{Case 4: $ (s,r) \in  BD $}
Here $s=1/2$ and $1< r< 3/2$. According to \eqref{condsigma}, we
choose $\sigma=1-\varepsilon$. Then \eqref{EmbedI+3} after duality
becomes
$$
 H^{1-3\varepsilon, 1/2-\varepsilon}  \cdot  H^{0, 1/2+\varepsilon}
\embeds H^{-3/2+r,-1+\varepsilon}
 $$
which holds by \eqref{bie1}.\\

\paragraph{Case 5: $ (s,r) \in  D \ (\text{i.e}, (s,r)=(1/2, 1) )$}
Then by \eqref{condsigma} we have $\sigma=2/3+\varepsilon$. Hence
\eqref{EmbedI+3} becomes
$$
 H^{1/2, 2/3+\varepsilon}  \cdot  H^{0, 1/2+\varepsilon} \embeds
H^{-1+3\varepsilon,-1/6-\varepsilon},
 $$
which is true by \eqref{bie1e}.

\subsection{Estimate for $I^-$}\label{MinusReduction}

Assume first $\fixedabs{\eta} \ll \fixedabs{\eta-\xi}$. Then
$\abs{\xi} \sim \fixedabs{\eta-\xi}$, so by \eqref{ThetaEstimates},
$$
  \theta_-^2 \sim
\frac{\abs{\xi}\kappa_-}{\fixedabs{\eta}\fixedabs{\eta-\xi}},
$$
 and hence we have the same estimate for $\theta_-$ as for
$\theta_+$. Moreover, by \eqref{rEstimateA} and \eqref{rEstimateB}
we have
 \begin{equation}\label{k-case}
  \kappa_-^{1/2} \lesssim
\angles{\Gamma}^{1/2-2\varepsilon}
\min(\fixedabs{\eta},\fixedabs{\eta-\xi})^{2\varepsilon} +
\angles{\Theta}^{1/2} + \angles{\Sigma_-}^{1/2},
\end{equation}
 so the analysis of
$I^+$ in the previous subsection applies also to $I^-$. The same is
true if $\fixedabs{\eta} \gg \fixedabs{\eta-\xi}$ or $\abs{\xi} \sim
\fixedabs{\eta} \sim \fixedabs{\eta-\xi}$. Hence we assume from now
on that
\begin{equation}\label{FrequencySupport2}
  \abs{\xi} \ll \fixedabs{\eta} \sim \fixedabs{\eta-\xi},
\end{equation}
in $I^-$. By \eqref{FrequencySupport} and \eqref{ThetaEstimates}, we
have
$$
  I^- \lesssim
  \norm{\int_{\R^{1+3}} \frac{\kappa_-^{1/2} F(\lambda,\eta)
G_-(\lambda-\tau,\eta-\xi)}
  { \angles{\xi}^{1-r}\angles{\eta}^{s} \angles{\eta-\xi}^{1/2+s}
\angles{\Gamma}^{1/2-2\varepsilon}
   \angles{\Theta}^{\sigma} \angles{\Sigma_-}^{\sigma}} \, d\lambda \,
d\eta}_{L^2_{\tau,\xi}}.
$$
By \eqref{k-case}, the estimate reduces to
$$
  I_j^- \lesssim \norm{F}_{L^2}
  \norm{G_-}_{L^2}, \ \ j=1,2, 3 ,
$$
where
\begin{align*}
  I^-_1 &= \norm{\int_{\R^{1+3} }
  \frac{ F(\lambda,\eta) G_-(\lambda-\tau,\eta-\xi)}
  {\angles{\xi}^{1-r}
\angles{\eta-\xi}^{1/2+2s-2\varepsilon}
   \angles{\Theta}^{\sigma} \angles{\Sigma_-}^{\sigma}} \, d\lambda \,
d\eta}_{L^2_{\tau,\xi}},\\
  I^-_2 & = \norm{\int_{\R^{1+3} }
  \frac{ F(\lambda,\eta) G_-(\lambda-\tau,\eta-\xi)}
  {  \angles{\xi}^{1-r}\angles{\eta-\xi}^{1/2+2s}
   \angles{\Gamma}^{1/2-2\varepsilon}\angles{\Theta}^{\sigma-1/2}
\angles{\Sigma_-}^{\sigma} } \, d\lambda \, d\eta}_{L^2_{\tau,\xi}},\\
  I^-_3& = \norm{\int_{\R^{1+3} }
  \frac{ F(\lambda,\eta) G_-(\lambda-\tau,\eta-\xi)}
  {\angles{\xi}^{1-r} \angles{\eta}^{1/2+2s}
   \angles{\Gamma}^{1/2-2\varepsilon}\angles{\Theta}^{\sigma}
\angles{\Sigma_-}^{\sigma-1/2}} \, d\lambda \,
d\eta}_{L^2_{\tau,\xi}}.
\end{align*}

By symmetry it suffices to consider $I_1^- $ and $I_2^-$.
\subsubsection{  Estimate for $I^-_1$ }\label{EstI1- } Here the
problem reduces to
$$
  H^{0, \sigma} \cdot  H^{1/2+2s-2\varepsilon, \sigma} \embeds
     H^{-1+r, 0 },
$$
which holds by Theorem \ref{prodthm1} provided
$$
 r \le 1, \quad s>0 , \quad   r< 1/2+2s,
$$ and  $\sigma>1/2$.
Now assuming $r\ge1$, which implies $\angles{\xi}^{r-1}\lesssim
\angles{\eta}^{r-1}+\angles{\eta-\xi}^{r-1}\sim
\angles{\eta-\xi}^{r-1}$, the problem reduces to
$$
  H^{0, \sigma} \cdot  H^{3/2+2s-r-2\varepsilon, \sigma} \embeds
     L^2,
     $$
which is true by Theorem \ref{prodthm1} provided $ r< 1/2+2s $ and
$\sigma>1/2$. Thus, the estimate for $I^-_1$ holds in the desired
region described in figure \ref{fig:1}.

\subsubsection{ Estimate for $I^-_2$ }\label{EstI2- } We may
assume $ \abs{\Gamma} \lesssim \min(\abs{\eta}, \abs{\eta-\xi}) \sim
\abs{\eta-\xi}$, since otherwise $I^{-}$ reduces to $I^{-}_1$.
Giving up the weight $ \angles{\Theta}$, the problem reduces to
$$
  L^2 \cdot H^{1/2+2s-3\varepsilon, \sigma} \embeds
   H^{-1+r, -1/2-\varepsilon}.
$$ By duality this is equivalent to the embedding
$$
  H^{1-r, 1/2+\varepsilon} \cdot  H^{1/2+2s-3\varepsilon, \sigma} \embeds
   L^2  ,
$$ which holds by Theorem \ref{prodthm1} if
$$
r < 1, \quad s>-1/4, \quad   r< 1/2+2s,
$$and  $\sigma>1/2$.
For $r\ge 1$, using the triangle inequality as in the previous
subsection, the problem reduces to
$$
    H^{0, 1/2+\varepsilon} \cdot  H^{3/2+2s-r-3\varepsilon, \sigma} \embeds
    L^2,
$$ which is true by Theorem \ref{prodthm1} if  $ r< 1/2+2s $ and  $\sigma
>1/2$. Thus, the
estimate for $I^-_2$ holds in the desired region described in figure
\ref{fig:1}.\\

\section{Proof of \eqref{Bilinear-DiracD}}\label{proofBilinear-Dirac}
Without loss of generality we take $[ \pm] = + $. Assume
$\psi,\psi'\in \mathcal S(\R^{1+3})$ . In view of the null form
estimate \eqref{NullFormEstimate}, we can reduce
\eqref{Bilinear-Dirac} (write $\rho=1/2+\varepsilon$, as in
\eqref{condrho}) to
\begin{equation}\label{IIplusminus}
  J^{\pm}
  \lesssim \norm{\psi}_{{X_+^{s, \sigma}}}
  \norm{\psi'}_{X_\pm^{-s,1-\sigma-\varepsilon}},
\end{equation}
where now
$$
  J^{\pm} = \norm{\int_{\R^{1+3}} \frac{\theta_\pm}{\angles{\xi}^{r}
\angles{\abs{\tau}-\abs{\xi}}^{1/2+\varepsilon}} \abs{\widetilde
\psi(\lambda,\eta)}
  \abs{\widetilde \psi'(\lambda-\tau,\eta-\xi)} \, d\lambda \,
d\eta}_{L^2_{\tau,\xi}},
$$
and $\theta_\pm = \vangle\bigl(\eta,\pm(\eta-\xi)\bigr)$ as before.
We use the same notation as in the previous section, except that now
$$
  G_\pm(\lambda,\eta) = \angles{\eta}^{-s}
\angles{\lambda\pm\fixedabs{\eta}}^{1-\sigma-\varepsilon}
  \abs{\widetilde \psi'(\lambda,\eta)}.
$$
The low frequency case, $\min(\fixedabs{\eta},\fixedabs{\eta-\xi})
\le 1$ in $J^{\pm}$, follows from a similar argument as in
~\cite{dfs2006}, and hence we do not consider this question here.
From now on we therefore assume that in $J^{\pm}$,
\begin{equation}\label{HighFrequency}
  \fixedabs{\eta}, \fixedabs{\eta-\xi} \ge 1.
\end{equation}

\subsection{Estimate for $J^+$} By \eqref{ThetaEstimates} and
\eqref{HighFrequency},
$$
  J^+ \lesssim
  \norm{\int_{\R^{1+3}} \frac{\kappa_+^{1/2} F(\lambda,\eta)
G_+(\lambda-\tau,\eta-\xi)}
  {\angles{\xi}^{r-1/2} \angles{\eta}^{1/2+s} \angles{\eta-\xi}^{1/2-s}
\angles{\Gamma}^{1/2+\varepsilon}
  \angles{\Theta}^{\sigma} \angles{\Sigma_+}^{1-\sigma-\varepsilon}} \,
d\lambda \, d\eta}_{L^2_{\tau,\xi}}.
$$
By \eqref{rEstimateA} and \eqref{rEstimateB},
$$\kappa_+^{1/2} \lesssim \abs{\Gamma}^{1/2}
 +\abs{\Theta}^{1/2}+
\abs{\Sigma_+}^{1-\sigma-\varepsilon}\fixedabs{\eta-\xi}^{\sigma-1/2+\varepsilon}.$$
Hence the estimate reduces to
$$
  J_j^+ \lesssim \norm{F}_{L^2}
  \norm{G_+}_{L^2}, \ \ j=1,2, 3 ,
$$
where
\begin{align*}
  J^+_1 &= \norm{\int_{\R^{1+3} }
  \frac{ F(\lambda,\eta) G_+(\lambda-\tau,\eta-\xi)}
  {\angles{\xi}^{r-1/2}\angles{\eta}^{1/2+s}
\angles{\eta-\xi}^{1/2-s}
   \angles{\Theta}^{\sigma} \angles{\Sigma_+}^{1-\sigma-\varepsilon}} \,
d\lambda \, d\eta}_{L^2_{\tau,\xi}},\\
  J^+_2 &= \norm{\int_{\R^{1+3} }
  \frac{ F(\lambda,\eta) G_+(\lambda-\tau,\eta-\xi)}
  {  \angles{\xi}^{r-1/2}\angles{\eta}^{1/2+s}
\angles{\eta-\xi}^{1/2-s}\angles{\Gamma}^{1/2+\varepsilon}
    \angles{\Theta}^{\sigma-1/2}\angles{\Sigma_+}^{1-\sigma-\varepsilon}
} \, d\lambda \, d\eta}_{L^2_{\tau,\xi}},\\
  J^+_3& = \norm{\int_{\R^{1+3} }
  \frac{ F(\lambda,\eta) G_+(\lambda-\tau,\eta-\xi)}
  {\angles{\xi}^{r-1/2}\angles{\eta}^{1/2+s}
\angles{\eta-\xi}^{1-s-\sigma-\varepsilon}
   \angles{\Gamma}^{1/2+\varepsilon}\angles{\Theta}^{\sigma} } \,
d\lambda \, d\eta}_{L^2_{\tau,\xi}},
\end{align*}\\
\subsubsection{ Estimate for $J^+_1$ } \label{EstJ1+ }
The problem reduces to
\begin{equation}\label{EmbedJ+1}
   H^{1/2+s, \sigma} \cdot H^{1/2-s, 1-\sigma-\varepsilon}
\embeds
   H^{1/2-r, 0}.
\end{equation}
\\
If $s>1$ and $ r\ge s$, then \eqref{EmbedJ+1} reduces to
$$H^{1/2+s, \sigma} \cdot H^{1/2-s, 1-\sigma-\varepsilon}
\embeds
   H^{1/2-s, 0},$$ which
is true by Theorem \ref{prodthm2}, for all
$1/2<\sigma<1$.\\

It remains to prove \eqref{EmbedJ+1} in the region $R$ (see figure
\ref{fig:1}). We split this into the following five cases:\\

\paragraph{Case 1: $( s, r) \in R_1 $}  Then according to \eqref{condsigma}, we choose
$\sigma=1/2+s/3$. Write $r=1/2+s/3+\varrho$; \eqref{EmbedJ+1}
becomes
$$
   H^{1/2+s, 1/2+s/3} \cdot H^{1/2-s, 1/2-s/3-\varepsilon}
\embeds
   H^{-s/3-\varrho, 0},
$$ which holds by \eqref{bie3} for $0<s<1/2$.
 \\
 \paragraph{Case 2: $( s, r) \in R_2 $}
Then by \eqref{condsigma}, we choose
 $\sigma=1/2+s$.
  Write  $r=1/2+s+\varrho$; \eqref{EmbedJ+1} becomes
$$
   H^{1/2+s, 1/2+s} \cdot H^{1/2-s, 1/2-s-\varepsilon}
\embeds
   H^{-s-\varrho, 0},
$$ which holds by \eqref{bie3} for $0<s< 1/2$.
  \\
\paragraph{Case 3: $( s, r) \in  R_3$} Then according to \eqref{condsigma}, we
choose $\sigma=5/6-s/3+\varepsilon$. Writing $r=1/3+2s/3+\varrho$,
\eqref{EmbedJ+1} becomes
\begin{equation}\label{EmbedJ+16}
   H^{1/2+s, 5/6-s/3+\varepsilon} \cdot H^{1/2-s, 1/6+s/3-2\varepsilon}
\embeds
   H^{1/6-2s/3-\varrho, 0}.
\end{equation}
At $s=1/2$, \eqref{EmbedJ+16} becomes
 \begin{equation}\label{EmbedJ+17}
  H^{1, 2/3+\varepsilon} \cdot H^{0,
1/3-2\varepsilon} \embeds
   H^{-1/6-\varrho, 0}
\end{equation}
which holds by \eqref{bie4}. At $s=1$, \eqref{EmbedJ+16} becomes
\begin{equation}\label{EmbedJ+18}
   H^{3/2, 1/2+\varepsilon} \cdot H^{-1/2,
1/2-2\varepsilon} \embeds
   H^{-1/2-\varrho, 0},
\end{equation}
which is true by Theorem \ref{prodthm2}. Hence we get
\eqref{EmbedJ+16} by interpolating between \eqref{EmbedJ+17} and
\eqref{EmbedJ+18} with $\theta=-1+2s$.
\\

\paragraph{Case 4: $( s, r) \in  BD$}
Here $s=1/2$ and $1< r<3/2$. Then we choose $\sigma=1-\varepsilon$
in view of \eqref{condsigma}. Hence \eqref{EmbedJ+1} becomes
$$
   H^{1, 1-\varepsilon} \cdot L^2
\embeds
   H^{1/2-r, 0},
$$ which holds by Theorem \ref{prodthm2}.\\

\paragraph{Case 5: $( s, r) \in  BD$}
Then in view of \eqref{condsigma}, we choose
$\sigma=3/2-s+4\varepsilon$. Writing $r=1/2+s+\varrho$,
\eqref{EmbedJ+1} reduces to
$$
H^{1/2+s, 3/2-s+4\varepsilon} \cdot H^{1/2-s, -1/2+s-5\varepsilon}
\embeds
   H^{-s-\varrho, 0},
$$ which is true by Theorem \ref{prodthm2} for $1/2<s\le 1$.

\subsubsection{ Estimate for $J^+_2$ }\label{EstJ2+ }
By duality the problem reduces to
\begin{equation}\label{EmbedJ+2}
   H^{-1/2+r,
1/2+\varepsilon} \cdot  H^{1/2-s, 1-\sigma-\varepsilon} \embeds
   H^{-1/2-s, 1/2-\sigma}.
\end{equation}\\
Assume $s>1$ and $ r \ge s$. Then \eqref{EmbedJ+2} reduces to
proving
$$H^{-1/2+s,
1/2+\varepsilon} \cdot  H^{1/2-s, 1-\sigma-\varepsilon} \embeds
   H^{-1/2-s, 1/2-\sigma},$$ which
holds by Theorem \ref{prodthm2} for all
$1/2<\sigma<1$.\\

To prove \eqref{EmbedJ+2} for $(s,r) \in R$, we consider the
following five
cases.\\

\paragraph{Case 1: $( s, r) \in R_1 $}  Then by \eqref{condsigma}, we choose $\sigma=1/2+s/3$.
Writing $r=1/2+s/3+\varrho$, \eqref{EmbedJ+2} becomes
\begin{equation*}\label{EmbedJ+20}
   H^{s/3+\varrho, 1/2+\varepsilon} \cdot H^{1/2-s, 1/2-s/3-\varepsilon}
\embeds
   H^{-1/2-s, -s/3}.
\end{equation*}
At $s=\delta$, this holds by \eqref{bie5}, and at $s=1/2-\delta$
 by \eqref{bie8}; interpolation implies the intermediate cases.\\

\paragraph{Case 2: $( s, r) \in R_2 $}
By \eqref{condsigma}, we choose
 $\sigma=1/2+s$. Then writing  $r=1/2+s+\varrho$ , \eqref{EmbedJ+2} becomes
\begin{equation*}\label{EmbedJ+23}
   H^{s+\varrho, 1/2+\varepsilon} \cdot H^{1/2-s, 1/2-s-\varepsilon}
\embeds
   H^{-1/2-s, -s}.
\end{equation*}
At $s=\delta$, this holds by \eqref{bie5}, and at $s=1/2-\delta$ by
Theorem \ref{prodthm2}; the intermediate cases follows by
interpolation.
\\

 \paragraph{Case 3: $( s, r) \in  R_3 $}
Then according to \eqref{condsigma}, we choose
$\sigma=5/6-s/3+\varepsilon$. Write $r=1/3+2s/3+\varrho$;
\eqref{EmbedJ+2} becomes
\begin{equation}\label{EmbedJ+26}
   H^{-1/6+2s/3+\varrho, 1/2+\varepsilon} \cdot H^{1/2-s,
1/6+s/3-2\varepsilon} \embeds
   H^{-1/2-s,-1/3+s/3-\varepsilon}.
\end{equation}
At $s=1/2$, \eqref{EmbedJ+26} reduces to
\begin{equation}\label{EmbedJ+27}
 H^{1/6+\varrho, 1/2+\varepsilon} \cdot H^{0, 1/3-2\varepsilon}
\embeds
   H^{-1, -1/6}.
\end{equation} Using the triangle inequality $\angles{\eta-\xi}\lesssim
\angles{\xi}+\angles{\eta}$, \eqref{EmbedJ+26} can be reduced to
$$
H^{1/6+\varrho-\delta, 1/2+\varepsilon} \cdot H^{\delta,
1/3-2\varepsilon}\embeds
   H^{-1, -1/6} $$ and $$
 H^{1/6+\varrho, 1/2+\varepsilon} \cdot H^{\delta, 1/3-2\varepsilon} \embeds
   H^{-1+\delta, -1/6},$$
 which both hold by \eqref{bie8}.
 At $s=1$,
\eqref{EmbedJ+26} becomes
\begin{equation}\label{EmbedJ+28}
 H^{1/2+\varrho, 1/2+\varepsilon} \cdot H^{-1/2,
1/2-2\varepsilon} \embeds
   H^{-3/2, -\varepsilon},
\end{equation} which holds by Theorem \ref{prodthm2}.
Interpolation between \eqref{EmbedJ+27} and \eqref{EmbedJ+28} with
$\theta=2s-1$, gives
  \eqref{EmbedJ+26}.
\\

 \paragraph{Case 4: $( s, r) \in BD $}
We choose $\sigma=1-\varepsilon$, by \eqref{condsigma}. Then
\eqref{EmbedJ+2} becomes
$$
   H^{-1/2+r, 1/2+\varepsilon} \cdot L^2
\embeds
   H^{-1, -1/2-\varepsilon},
$$ which is true by Theorem \ref{prodthm2}.\\

 \paragraph{Case 5: $( s, r) \in  R_4 $}
Then by \eqref{condsigma}, we choose $\sigma=3/2-s+4\varepsilon$.
Write $r=1/2+s+\varrho$; \eqref{EmbedJ+2} reduces to
$$
   H^{s+\varrho, 1/2+\varepsilon} \cdot H^{1/2-s, -1/2+s-5\varepsilon}
\embeds
   H^{-1/2-s, -1+s-4\varepsilon},
$$ which holds by Theorem \ref{prodthm2} for $s>1/2$.

\subsubsection{  Estimate for $J^+_3$ }\label{EstJ3+ }
By duality the problem reduces to
\begin{equation}\label{EmbedJ+3}
  H^{1/2+s, \sigma} \cdot  H^{-1/2+r,
1/2+\varepsilon} \embeds
   H^{-1+s+\sigma+\varepsilon, 0}.
\end{equation}

Assume $s>1$ and $ r\ge s$. Then \eqref{EmbedJ+3} reduces to
$$H^{1/2+s, \sigma} \cdot  H^{-1/2+s,
1/2+\varepsilon} \embeds
  H^{-1+s+\sigma+\varepsilon, 0},$$ which
holds by Theorem \ref{prodthm1} for all
$1/2<\sigma<1$.\\

Next, we prove that \eqref{EmbedJ+3} holds for $(s,r) \in R$.\\

\paragraph{Case 1: $( s, r) \in  R_1$}  Then by \eqref{condsigma}, we choose $\sigma=1/2+s/3$.
Write $r=1/2+s/3+\varrho$; \eqref{EmbedJ+3} becomes
$$
   H^{1/2+s, 1/2+s/3} \cdot H^{s/3+\varrho, 1/2+\varepsilon}
\embeds
   H^{-1/2+4s/3+\varepsilon, 0},
 $$
   which is true by Theorem \ref{prodthm1} for $0<s<1/2$.
   \\

 \paragraph{Case 2: $( s, r) \in  R_2$}
   We choose
 $\sigma=1/2+s$, by \eqref{condsigma}.
  Then writing  $r=1/2+s+\varrho$, \eqref{EmbedJ+3} becomes
   $$
   H^{1/2+s, 1/2+s} \cdot H^{s+\varrho, 1/2+\varepsilon}
\embeds
   H^{-1/2+2s+\varepsilon, 0}.
$$ which holds by Theorem
   \ref{prodthm1} for $0<s<1/2$.\\

\paragraph{Case 3: $( s, r) \in  R_3$}.  Then according to \eqref{condsigma}, we choose
$\sigma=5/6-s/3+\varepsilon$. Write $r=1/3+2s/3+\varrho$;
\eqref{EmbedJ+3}
 becomes
$$
   H^{1/2+s, 5/6-s/3+\varepsilon} \cdot H^{-1/6+2s/3+\varrho,
1/2+\varepsilon} \embeds
   H^{-1/6+2s/3 + 2\varepsilon, 0},
$$ which is true by Theorem \ref{prodthm1} for $1/2\le s \le 1$.
\\

\paragraph{Case 4: $( s, r) \in  BD$}.
Here, $s=1/2$ and $1< r<3/2$). By \eqref{condsigma}, we choose
$\sigma=1-\varepsilon$. Then \eqref{EmbedJ+3} becomes
$$
   H^{1, 1-\varepsilon} \cdot H^{-1/2+r, 1/2+\varepsilon}
\embeds
   H^{1/2, 0},
$$ which holds by Theorem \ref{prodthm1}.\\

\paragraph{Case 5: $( s, r) \in  R_4$}
Then by \eqref{condsigma}, we choose $\sigma=3/2-s+4\varepsilon$.
Write $r=1/2+s+\varrho$; \eqref{EmbedJ+3} becomes
$$
   H^{1/2+s, 3/2-s+4\varepsilon} \cdot H^{s+\varrho, 1/2+\varepsilon}
\embeds
   H^{1/2+5\varepsilon, 0},
$$ which is true by Theorem \ref{prodthm1} for $1/2< s\le1$.

\subsection{Estimate for $J^-$}
By the same argument as in subsection \ref{MinusReduction}, we may
assume
$$
\abs{\xi} \ll \fixedabs{\eta} \sim \fixedabs{\eta-\xi}.
$$
Combining this with \eqref{ThetaEstimates} and
\eqref{HighFrequency}, we get
$$
  J^- \lesssim
  \norm{\int_{\R^{1+3}} \frac{\kappa_-^{1/2} F(\lambda,\eta)
G_-(\lambda-\tau,\eta-\xi)}
  {\angles{\xi}^{r} \angles{\eta}^{1/4} \angles{\eta-\xi}^{1/4}
\angles{\Gamma}^{1/2+\varepsilon}
  \angles{\Theta}^{\sigma} \angles{\Sigma_-}^{1-\sigma-\varepsilon}} \,
d\lambda \, d\eta}_{L^2_{\tau,\xi}}.
$$
By  \eqref{rEstimateA} and \eqref{rEstimateB}, we get
$\kappa_-^{1/2} \lesssim \abs{\Gamma}^{1/2}
 +\abs{\Theta}^{1/2}
+
\abs{\Sigma_-}^{1-\sigma-\varepsilon}\fixedabs{\eta-\xi}^{\sigma-1/2+\varepsilon}$.
Hence the estimate reduces to
$$
  J_j^- \lesssim \norm{F}_{L^2}
  \norm{G_-}_{L^2}, \ \ j=1,2, 3 ,
$$
where
\begin{align*}
  J^-_1& = \norm{\int_{\R^{1+3} }
  \frac{ F(\lambda,\eta) G_(\lambda-\tau,\eta-\xi)}
  {\angles{\xi}^{r}\angles{\eta}^{1/2}
   \angles{\Theta}^{\sigma} \angles{\Sigma_-}^{1-\sigma-\varepsilon}} \,
d\lambda \, d\eta}_{L^2_{\tau,\xi}},\\
  J^-_2 &= \norm{\int_{\R^{1+3} }
  \frac{ F(\lambda,\eta) G_-(\lambda-\tau,\eta-\xi)}
  {  \angles{\xi}^{r}\angles{\eta-\xi}^{1/2}
    \angles{\Gamma}^{1/2+\varepsilon}\angles{\Theta}^{\sigma-1/2}\angles{\Sigma_-}^{1-\sigma-\varepsilon}}
\, d\lambda \, d\eta}_{L^2_{\tau,\xi}},\\
  J^-_3 &= \norm{\int_{\R^{1+3} }
  \frac{ F(\lambda,\eta) G_-(\lambda-\tau,\eta-\xi)}
  {\angles{\xi}^{r}\angles{\eta}^{1-\sigma-\varepsilon}
   \angles{\Theta}^{\sigma} \angles{\Gamma}^{1/2+\varepsilon}} \
d\lambda \, d\eta}_{L^2_{\tau,\xi}}.
\end{align*}\\
\subsubsection{ Estimate for $J^-_1$ }\label{EstJ1- } The problem reduces
to the  estimate
\begin{equation}\label{EmbedJ-1}
  H^{1/2, \sigma} \cdot  H^{0,
1-\sigma-\varepsilon} \embeds
   H^{-r, 0}.
\end{equation}

If $s>1$ and $ r\ge s$ , then \eqref{EmbedJ-1} reduces to
$$H^{1/2, \sigma} \cdot  H^{0,
1-\sigma-\varepsilon} \embeds
   H^{-s, 0},$$ which
holds by Theorem \ref{prodthm2} for all
$1/2<\sigma<1$.\\

We now prove \eqref{EmbedJ-1} for $(s,r) \in R$.\\

\paragraph{Case 1: $( s, r) \in   R_1 $}  Then by \eqref{condsigma}, we choose
$\sigma=1/2+s/3$. Write $r=1/2+s/3+\varrho$; \eqref{EmbedJ-1}
becomes
 $$
   H^{1/2, 1/2+s/3} \cdot H^{0, 1/2-s/3-\varepsilon}
\embeds
   H^{-1/2-s/3-\varrho, 0},
$$ which holds by \eqref{bie9} for $0<s<1/2$.
\\

\paragraph{Case 2: $( s, r) \in   R_2 $}
  Then we choose
 $\sigma=1/2+s$, by \eqref{condsigma}. Writing $r=1/2+s+\varrho$,
\eqref{EmbedJ-1} becomes
$$
   H^{1/2, 1/2+s} \cdot H^{0,
1/2-s-\varepsilon} \embeds
   H^{-1/2-s-\varrho, 0},
$$  which is true by \eqref{bie9} for $0<s<1/2$.
\\

\paragraph{Case 3: $( s, r) \in R_3$}  Then according to \eqref{condsigma}, we choose
$\sigma=5/6-s/3+\varepsilon$. Write $r=1/3+2s/3+\varrho$;
\eqref{EmbedJ-1}
 becomes
$$
   H^{1/2, 5/6-s/3+\varepsilon} \cdot H^{0, 1/6+s/3-2\varepsilon}
\embeds
   H^{-1/3-2s/3-\varrho, 0},
$$
which holds by \eqref{bie9} for $1/2\le s\le 1$.\\

\paragraph{Case 4: $( s, r) \in BD$}
Here $s=1/2$ and $1< r<3/2$. We choose $\sigma=1-\varepsilon$ by
\eqref{condsigma}.
 Then
\eqref{EmbedJ-1} becomes
$$ H^{1/2, 1-\varepsilon} \cdot L^2 \embeds
   H^{-r, 0},$$  which is true by Theorem \ref{prodthm2}.\\

   \paragraph{Case 5: $( s, r) \in R_4$}
Then by \eqref{condsigma}, we choose $\sigma=3/2-s+4\varepsilon$.
Write $r=1/2+s+\varrho$; \eqref{EmbedJ-1} becomes
$$
   H^{1/2, 3/2-s+4\varepsilon} \cdot H^{0,
-1/2+s-5\varepsilon} \embeds
   H^{-1/2-s-\varrho, 0},
$$ which is true by Theorem \ref{prodthm2} for $1/2<s\le 1$.
\subsubsection{  Estimate for $J^-_2$ }\label{EstJ2- }
Giving up the weight $\angles{\Theta}^{\sigma-1/2}$ and keep
duality, the problem reduces to
\begin{equation}\label{EmbedJ-2}
  H^{r,
1/2+\varepsilon} \cdot H^{1/2, 1-\sigma-\varepsilon} \embeds L^2.
\end{equation}

  Assume $s>1$ and $ r\ge s$. Then
\eqref{EmbedJ-2} reduces to proving
$$H^{s, 1/2+\varepsilon} \cdot  H^{1/2,
1-\sigma-\varepsilon} \embeds
   L^2,$$ which
holds by Theorem \ref{prodthm2} for all
$1/2<\sigma<1$.\\

It remains to prove \eqref{EmbedJ-2} for $(s,r) \in R$, which we
shall do
in the following five cases.\\

\paragraph{Case 1: $( s, r) \in  R_1 $}  Then by \eqref{condsigma}, we choose $\sigma=1/2+s/3$.
Write $r=1/2+s/3+\varrho$; \eqref{EmbedJ-2} becomes
\begin{equation*}\label{EmbedJ-21}
 H^{1/2+s/3+\varrho, 1/2+\varepsilon} \cdot H^{1/2,
1/2-s/3-\varepsilon} \embeds
  L^2.
\end{equation*}
At $s=\delta$, this holds by \eqref{bie6}, and at $s=1/2-\delta$,
 by \eqref{bie7}; the intermediate cases follows by interpolation.
  \\

\paragraph{Case 2: $( s, r) \in  R_2 $}
We choose
 $\sigma=1/2+s$,  by \eqref{condsigma}. Then writing  $r=1/2+s+\varrho$,
\eqref{EmbedJ-2} becomes
\begin{equation*}\label{EmbedJ-24}
   H^{1/2+s+\varrho, 1/2+\varepsilon} \cdot H^{1/2,
1/2-s-\varepsilon} \embeds
   L^2.
\end{equation*}
At $s=\delta$, this holds by \eqref{bie6}, and at $s=1/2-\delta$
 by Theorem \ref{prodthm2}; interpolation implies the intermediate
 cases.
\\

\paragraph{Case 3: $( s, r) \in  R_3$}  Then according to \eqref{condsigma}, we choose
$\sigma=5/6-s/3+\varepsilon$. Write $r=1/3+2s/3+\varrho$;
\eqref{EmbedJ-2}
 becomes
$$
   H^{1/3+2s/3+\varrho, 1/2+\varepsilon} \cdot H^{1/2,
1/6+s/3-2\varepsilon} \embeds
   L^2,
$$  which holds by \eqref{bie7} for $1/2\le s\le 1$.
\\

\paragraph{Case 4: $( s, r) \in  BD $}
We choose $\sigma=1-\varepsilon$, by \eqref{condsigma}. Then
\eqref{EmbedJ-2} becomes
$$
H^{r, 1/2+\varepsilon} \cdot H^{1/2, 0} \embeds
   L^2,
$$ which is true by Theorem \ref{prodthm2}.\\

\paragraph{Case 5: $( s, r) \in  R_4 $}
Then by \eqref{condsigma}, we choose $\sigma=3/2-s+4\varepsilon$.
 Write $r=1/2+s+\varrho$;
\eqref{EmbedJ-2} becomes
$$
   H^{1/2+s+\varrho, 1/2+\varepsilon} \cdot H^{1/2,
-1/2+s-5\varepsilon} \embeds
   L^2.
$$ which holds by Theorem \ref{prodthm2} for $1/2< s\le 1$.
\subsubsection{  Estimate for $J^-_3$ }\label{EstJ13 } By duality,
 the problem reduces to
\begin{equation}\label{EmbedJ-3}
  H^{r,
1/2+\varepsilon} \cdot H^{1-\sigma-\varepsilon, \sigma} \embeds
   L^{2}.
\end{equation}

If $s>1$ and $ r\ge s$, then \eqref{EmbedJ-3} reduces to
$$  H^{s, 1/2+\varepsilon} \cdot H^{1-\sigma-\varepsilon,
1/2+\varepsilon} \embeds
   L^{2},$$ which holds by Theorem \ref{prodthm1} for all
$1/2<\sigma<1$.
\\

We next prove \eqref{EmbedJ-3} for $(s,r) \in R$.\\
\paragraph{Case 1: $( s, r) \in  R_1 $}  Then by \eqref{condsigma}, we choose $\sigma=1/2+s/3$.
Write $r=1/2+s/3+\varrho$; \eqref{EmbedJ-3} becomes
$$
   H^{1/2+s/3+\varrho, 1/2+\varepsilon} \cdot H^{1/2-s/3-\varepsilon,1/2+s/3} \embeds
  L^2.
$$
 which holds by Theorem \ref{prodthm1} for $0<s<3/2$.
\\

\paragraph{Case 2: $( s, r) \in  R_2 $}
 We choose
 $\sigma=1/2+s$,  by \eqref{condsigma}. Then writing  $r=1/2+s+\varrho$,
\eqref{EmbedJ-3} becomes
$$
   H^{1/2+s+\varrho, 1/2+\varepsilon} \cdot H^{1/2-s-\varepsilon,
1/2+s} \embeds
   L^2,
$$ which holds by Theorem \ref{prodthm1} for $0<s<1/2$.
\\

\paragraph{Case 3: $( s, r) \in R_3$} Then according to \eqref{condsigma}, we choose
$\sigma=5/6-s/3+\varepsilon$. Write $r=1/3+2s/3+\varrho$;
\eqref{EmbedJ-3}
 becomes
$$
   H^{1/3+2s/3+\varrho, 1/2+\varepsilon} \cdot
H^{1/6+s/3-2\varepsilon, 1/2+\varepsilon} \embeds
   L^2.
$$ which holds by Theorem \ref{prodthm1} for $s\ge 1/2$.
\\

\paragraph{Case 4: $( s, r) \in  BD $}
Then by \eqref{condsigma}, we choose $\sigma=1-\varepsilon$. Hence
\eqref{EmbedJ-3} becomes
$$
H^{r, 1/2+\varepsilon} \cdot H^{0, 1/2+\varepsilon} \embeds
   L^2,
$$ which is true by Theorem \ref{prodthm1}.\\

\paragraph{Case 5: $( s, r) \in  R_1 $}
Then by \eqref{condsigma}, we choose $\sigma=3/2-s+4\varepsilon$.
 Write $r=1/2+s+\varrho$; \eqref{EmbedJ-3} becomes
$$
   H^{1/2+s+\varrho, 1/2+\varepsilon} \cdot H^{-1/2+s-5\varepsilon,
1/2+ \varepsilon} \embeds
   L^2.
$$  which holds by Theorem \ref{prodthm1} for $s> 1/2$.

\section{Counterexamples}\label{Optimality}
Here we prove optimality conditions on $s$ and $r$ in Theorem
\ref{Mainthm}, as far as iteration in the spaces $X_\pm^{s,\sigma}$,
$H^{r,\rho}$ is concerned. To be precise, we prove:

\begin{theorem}\label{optmainthm} If \ $s \le 0
$ \ or \ $r \le \frac{1}2$ \ or \ $r<s$ \ or \ $r
> 1 +s$\ or \ $r>\frac{1}2 + 2s$,
  then for all $ \sigma, \rho \in \R$ and $\varepsilon > 0$,
at least one of the estimates \eqref{Bilinear-DiracD} or
\eqref{Bilinear-KG} fails.
\end{theorem}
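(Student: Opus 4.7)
The plan is to construct, for each of the five failure conditions, explicit Schwartz pairs whose spacetime Fourier transforms are essentially characteristic functions of carefully chosen subsets of $\R^{1+3}$. The observation that makes such counterexamples uniform in $\sigma,\rho$ is that every weight of the form $\angles{\lambda\pm|\eta|}^\sigma$ or $\angles{|\tau|-|\xi|}^\rho$ becomes an $O(1)$ factor once each factor is Fourier-localized to a unit-thickness slab around the appropriate light cone. Accordingly, I would begin each construction by confining the Fourier support of the inputs to $\{|\lambda\pm|\eta||\lesssim 1\}$, and arranging the frequency geometry so that the output lands in $\{||\tau|-|\xi||\lesssim 1\}$ as well. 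This reduces both \eqref{Bilinear-DiracD} and \eqref{Bilinear-KG} to an inequality purely in the spatial weights $\angles{\xi}^r,\angles{\eta}^s,\angles{\eta-\xi}^s$ together with the null angle $\theta_{\pm}$ supplied by Lemma \ref{NullLemma}.

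For the three dyadic failure regions, concentrated tubes at frequency scale $N\gg 1$ around a fixed direction $e_3$ do the job. For $r>\frac{1}{2}+2s$, I would test \eqref{Bilinear-KG} with $\psi=\psi_+=\psi'$ supported in a tube $\{\eta:|\eta-Ne_3|\lesssim 1,\ |\lambda+|\eta||\lesssim 1\}$, so that the output spatial frequency $\xi=\eta-\eta'$ has unit size while the null angle $\theta_+$ remains uniformly bounded; balancing both sides yields a ratio proportional to $N^{r-\frac{1}{2}-2s}$, which must remain bounded. The same geometry applied to \eqref{Bilinear-DiracD} handles the threshold $r\le 1+s$, and a high-low-to-high configuration (one input at frequency $N$, the other at unit frequency, output at frequency $N$, with opposite signs so that $\theta_-$ is bounded below) in \eqref{Bilinear-KG} produces a ratio growing like $N^{s-r}$, forcing $r\ge s$.

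The two scaling conditions $s>0$ and $r>\frac{1}{2}$ follow from the scale invariance of the DKG system in the critical class $L^2\times\dot H^{1/2}\times\dot H^{-1/2}$. Under the rescaling $\psi_\mu(t,x)=\mu^{3/2}\psi(\mu t,\mu x)$, $\phi_\mu(t,x)=\mu\,\phi(\mu t,\mu x)$, unit-frequency data in $H^s$ or $H^r$ scales as $\mu^{-s}$ or $\mu^{1/2-r}$ respectively, while the nonlinearities transform with an extra power of $\mu$; any purported bilinear estimate then blows up in either the $\mu\to\infty$ or $\mu\to 0$ limit whenever $s<0$ or $r<\frac{1}{2}$, the endpoints $s=0$ and $r=\frac{1}{2}$ being handled by a logarithmic argument summing over dyadically separated frequencies.

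The main obstacle will be ensuring that the Dirac projections $P_\pm$ and the matrix $\beta$ do not accidentally annihilate the chosen test spinors. Concretely, on each tube I must select a fixed $u_0\in\C^4$ so that $P_{[\pm]}(\eta)u_0$ is nonvanishing and $|\innerprod{\beta P_\mp(\eta-\xi)P_{[\pm]}(\eta)u_0}{u_0}|$ is comparable to $\theta_{[\pm],\pm}|u_0|^2$, that is, Lemma \ref{NullLemma} is essentially sharp along the tube. This reduces to a direct algebraic inspection of $\alpha\cdot\hat\eta$ against the Pauli matrices, but must be done for each combination of signs $[\pm],\pm$. Once the matching lower bound on the null form is verified, each of the five cases reduces to a direct integral computation against indicator functions, and the resulting powers of $N$ or $\mu$ contradict the bilinear estimate for any choice of $\sigma,\rho,\varepsilon$.
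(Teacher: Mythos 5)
Your overall strategy (Fourier-localized indicator-type test functions multiplied by eigenvectors of $P_\pm$, a lower bound showing the null form is comparable to the angle, and reading off a power of the large parameter) is the same in spirit as the paper's, and your target exponent $N^{r-1/2-2s}$ is the right one. But the concrete configurations you assign to the various conditions do not produce the exponents you claim. For $r>\tfrac12+2s$ you put both inputs in a unit-width tube at frequency $N$ with output frequency $\xi$ of unit size: then the weight $\angles{\xi}^{r-1}$ on the left of \eqref{Bilinear-KG} is $O(1)$, so $r$ never enters the computation, and moreover the null angle on such a tube is $\sim N^{-1}$, not bounded below. To see $r$ at all you need the output at frequency $\sim N$ (high-high to \emph{high} interaction) with transverse scale $N^{1/2}$, which is exactly the paper's first counterexample. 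Similarly, $r\le 1+s$ cannot be detected by testing \eqref{Bilinear-DiracD}, whose left-hand weight is $\angles{\xi}^{-r}$ so that large $r$ only helps; it must come from \eqref{Bilinear-KG} (high-low to high, condition \eqref{cond5} in the paper). Conversely, $r\ge s$ cannot come from \eqref{Bilinear-KG}, where both inputs carry the exponent $+s$; it requires \eqref{Bilinear-DiracD}, in which $\psi'$ is measured in $X_\pm^{-s,1-\sigma-\varepsilon}$.

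The more serious structural gap is your opening reduction: confining both inputs to unit-thickness slabs around the cone makes every modulation weight $O(1)$, and single-scale examples of that type can only yield the \emph{non-strict} conditions ($r\ge\tfrac12$, $r\ge s$, $r\le\tfrac12+2s$, $r\le 1+s$). The theorem, however, asserts failure \emph{at} $r=\tfrac12$ (and $s=0$) for every $\sigma,\rho,\varepsilon$. In the paper this strictness comes from counterexamples in which the low-frequency factor sits at modulation $\sim L^{1/2}$ away from the cone, producing the conditions \eqref{cond2}--\eqref{cond3} involving $\alpha_1,\alpha_2$; summing them and using the structural relation $\alpha_1+\alpha_2=\sigma+(1-\sigma-\varepsilon)=1-\varepsilon$ in \eqref{Bilinear-DiracD} gives $r\ge\tfrac12+\tfrac{\varepsilon}{4}$, and $s>0$ then follows by combining with $r\le\tfrac12+2s$. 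Your substitute for this step — scale invariance of the DKG data plus an unspecified ``logarithmic summation'' — is not a proof of failure of the two fixed inhomogeneous-norm bilinear estimates quantified over all $\sigma,\rho$: the norms involve $\angles{\xi}$ and $\angles{\tau\pm|\xi|}$ and do not scale, and at $r=\tfrac12$ the single-scale unit-modulation ratios are $O(1)$ while dyadic superpositions with essentially disjoint output supports add in $\ell^2$ on both sides, so no logarithmic divergence is forthcoming from that construction. As it stands the endpoint cases $r\le\tfrac12$ and $s\le 0$ are not established by your argument, and this is precisely where the paper's modulation-weighted counterexamples are indispensable.
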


More generally, we prove:
\begin{theorem}\label{opthmgen}
Let $a_1,a_2,a_3,\alpha_1,\alpha_2,\alpha_3 \in \R$. If the 4-spinor
estimate
$$
   \norm{\innerprod{\beta P_+(D_x)\psi}{P_\pm (D_x)
\psi'}}_{H^{-a_3,-\alpha_3}}
   \lesssim
   \norm{\psi}_{{X_+^{a_1,\alpha_1}}}
   \norm{\psi'}_{X_\pm^{a_2,\alpha_2}},
$$
holds for all $\psi, \psi' \in \mathcal{S}(\R^{1+3})$, then:
\begin{gather}
   \label{cond1}
   a_1+a_2+a_3 \ge \frac{1}2,
   \\
   \label{cond2}
   \frac{a_1+\alpha_1}2+a_2+a_3 \ge \frac34
   \\
   \label{cond3}
   a_1+ \frac{a_2+\alpha_2}2 + a_3 \ge \frac34,
   \\
   \label{cond4}
   a_1+a_3 \ge 0.
   \\
    \label{cond5}
   a_2+a_3 \ge 0.
   \\
    \label{cond6}
   a_1+a_2+\alpha_3 \ge 0.
\end{gather}
\end{theorem}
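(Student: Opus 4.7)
The plan is to establish each of the six conditions \eqref{cond1}--\eqref{cond6} by exhibiting, for each, a family of Schwartz test functions $(\psi_N,\psi_N')$, $N\ge 1$, on which the purported estimate degenerates as $N\to\infty$ unless that condition is satisfied. I take
\[
  \widetilde{\psi_N}(\lambda,\eta)=\mathbf{1}_{A_N}(\lambda,\eta)\,u,\qquad \widetilde{\psi_N'}(\lambda,\eta)=\mathbf{1}_{B_N}(\lambda,\eta)\,v,
\]
where $A_N,B_N\subset\R^{1+3}$ are Fourier-space boxes adapted to the configuration at hand, and $u,v\in\C^4$ are constant unit spinors chosen so that the scalar $\innerprod{\beta P_\mp(\eta-\xi)P_+(\eta)u}{v}$ has modulus comparable to $\vangle(\eta,\pm(\eta-\xi))$ uniformly across $A_N\times B_N$, thereby realizing the angular lower bound complementary to Lemma \ref{NullLemma}. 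With this in hand, the three norms appearing in the estimate are tracked through the formulae
\[
  \|\psi_N\|_{X_+^{a_1,\alpha_1}}\approx N_1^{a_1}L_1^{\alpha_1}|A_N|^{1/2},\qquad \|\psi_N'\|_{X_\pm^{a_2,\alpha_2}}\approx N_2^{a_2}L_2^{\alpha_2}|B_N|^{1/2},
\]
while the left-hand side is bounded below by $N_3^{-a_3}L_3^{-\alpha_3}$ times the null-form angle $\theta$ and the $L^2$-norm $\|\widetilde{\psi_N}\ast\widetilde{\psi_N'}\|_{L^2}$, which I evaluate via Plancherel as $\|\psi_N\psi_N'\|_{L^2}$.

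The six configurations are as follows. For \eqref{cond1} I use a Klainerman--Machedon--Knapp pair on the past cone in the common direction $e_1$ (radial width $1$, angular width $N^{-1/2}$, unit modulation, so $|A_N|\sim N$), producing an output on the cone at frequency $\sim 2N$ with null-form angle of order $1$; balancing the $N$-powers supplies the extra $\tfrac12$. For \eqref{cond4}--\eqref{cond5} I use a high-low pair in which one slot is a unit box on the cone at frequency $N$ and the other a unit box at frequency of order $1$ (with anti-parallel spatial orientation so the null-form angle stays of order $1$); the output sits at frequency $\sim N$ on the cone, reducing the inequality to $N^{-a_3}\lesssim N^{a_i}$. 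For \eqref{cond2}--\eqref{cond3} I refine the Knapp construction with a second parameter $L\in[1,N]$ governing the modulation of one slot: a Knapp plate of modulation $L$ and transverse width $(LN)^{1/2}$ is paired with a standard Knapp plate on the cone in the other slot, and balancing the joint $L$- and $N$-exponents isolates the combined expression $\tfrac{a_1+\alpha_1}{2}+a_2+a_3$ (and, by symmetry, its mirror image). For \eqref{cond6} I use a nearly antipodal Knapp pair on the past cone in directions $\pm e_1$ (unit modulation, angular width $N^{-1/2}$) translated by a transverse offset of size $K\in[1,N]$ treated as a free parameter; the output then has frequency $\sim K$, modulation $\sim 2N$, and null-form angle $\sim K/N$, and optimizing over $K$ in the resulting two-parameter inequality extracts \eqref{cond6}.

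The main technical obstacle will be the selection of the constant spinors $u,v$: Lemma~\ref{NullLemma} only supplies the \emph{upper} bound $|\beta P_\mp(\eta-\xi)P_+(\eta)|=O(\theta)$, so the pointwise \emph{lower} bound on $|\innerprod{\beta P_\mp(\eta-\xi)P_+(\eta)u}{v}|$ that is implicit in the reasoning above must be verified separately. My strategy is to diagonalize (or at least compute the leading-order spectral structure of) the $4\times 4$ matrix product at a representative central pair $(\eta_0,\xi_0)$ using the Dirac-matrix anticommutation relations \eqref{DiracIdentity1}, select $u$ and $v$ to saturate the dominant spectral component, and then invoke continuity to extend the lower bound to the full box $A_N\times B_N$ --- which, being narrow in the angular directions (width $N^{-1/2}$) and small in the radial/temporal directions, is sufficiently small that the leading spectral content of the matrix varies by only a bounded factor. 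The remaining calculations are then purely combinatorial volume-and-exponent counts, as sketched above.
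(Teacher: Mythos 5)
Your overall strategy (Knapp-type frequency-box counterexamples plus a volume/exponent count) is the same as the paper's, but as written the proposal has two genuine gaps. The first is exactly the point you flag as the ``main technical obstacle,'' and your proposed fix does not close it. With \emph{constant} spinors $u,v$ you need a pointwise lower bound $\abs{\innerprod{\beta P_{\mp}(\eta-\xi)P_{+}(\eta)u}{v}}\gtrsim\theta$ \emph{and} control of the complex phase so that the $d\lambda\,d\eta$ integral does not cancel. In the small-angle configuration needed for \eqref{cond1} the matrix $\beta P_{\mp}(\eta-\xi)P_{+}(\eta)$ has norm $\sim\theta\sim L^{-1/2}$ and it \emph{varies by the same order} $L^{-1/2}$ as $(\eta,\eta-\xi)$ ranges over the Knapp boxes, so ``continuity from a central pair $(\eta_0,\xi_0)$'' does not give a uniform lower bound, and the scalar can rotate in phase or change sign across the box. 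The paper avoids this entirely by taking frequency-\emph{dependent} spinor data, $\widetilde\psi(\lambda,\eta)=\mathbf{1}(\cdots)\,v_{+}(\eta)$ and $\widetilde\psi'=\mathbf{1}(\cdots)\,v_{+}(\eta-\xi)$ (resp. $v_{-}$), so that the explicit formula \eqref{Eigenproduct} applies and the \emph{imaginary part} equals $\pm\sin\theta_{+}$ with a sign fixed by the (fixed) orientation of $(\eta',\eta'-\xi')$ over the chosen sets; the lower bound then follows with no cancellation. Some such device is needed; it is the key missing ingredient in your argument.

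The second gap is in the configurations themselves. For \eqref{cond1}, a pair of plates in the \emph{common} direction $e_1$ cannot produce output at frequency $\sim 2N$: because of the complex conjugation, the output frequency is the \emph{difference} $\xi=\eta-(\eta-\xi)$, and with both inputs near the same null ray and $O(1)$ modulations the relevant angle is $\sim N^{-1/2}$, not of order $1$ (an order-one angle between two $+$-waves of small modulation forces output modulation $\sim N$ via $\kappa_{+}\le\abs{\Gamma}+\abs{\Theta}+\abs{\Sigma_{+}}$, which would drag $\alpha_3$ into the count and spoil \eqref{cond1}). The correct same-direction example puts the inputs at radii $L$ and $2L$ with transverse width $L^{1/2}$, giving angle $L^{-1/2}$, output at frequency $\sim L$ with $O(1)$ modulation, and the factor $\theta\sim L^{-1/2}$ is precisely what produces the $\tfrac12$ in \eqref{cond1}; your count, based on angle $\sim1$, would come out off by $\tfrac12$. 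For \eqref{cond6}, the paper uses the \emph{minus}-sign form with both spatial frequencies parallel at size $L$, second factor built on $v_{-}$ near the opposite cone sheet, so the output has frequency $O(1)$, modulation $\sim L$, and angle $\theta_{-}=\vangle(\eta,-(\eta-\xi))\sim1$; no angular loss occurs and the count gives exactly $a_1+a_2+\alpha_3\ge0$. Your antipodal-plus-offset construction with claimed angle $\sim K/N$ loses a factor from the null form and, for any admissible $K$, yields a strictly weaker condition than \eqref{cond6} (or, if the plus-sign angle of order $1$ is meant, the stated angle and count are inconsistent). These configurational points, together with the spinor lower bound, need to be repaired before the exponent counts can be trusted.
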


\subsection{Proof of Theorem \ref{optmainthm}}
Applying \eqref{cond1} and \eqref{cond5} in Theorem \ref{opthmgen}
to \eqref{Bilinear-DiracD}, with $(a_1,a_2,a_3, \alpha_1, \alpha_2,
\alpha_3) = (s,-s,r, \sigma, 1-\sigma-\varepsilon, \rho)$, we see
that the conditions $r\ge 1/2$ and $r\ge s$ are necessary.
Similarly, we apply \eqref{cond1} and \eqref{cond5} in Theorem
\ref{opthmgen} to \eqref{Bilinear-KG}, with
$(a_1,a_2,a_3,\alpha_1,\alpha_2,\alpha_3)
=(s,s,1-r,\sigma,\sigma,1-\rho-\varepsilon)$, to obtain the
necessary conditions $r\le 1/2 +2s$ and $r\le 1+ s$. We further
apply the summation of \eqref{cond2} and \eqref{cond3} to
\eqref{Bilinear-DiracD} to obtain the necessary condition $r>1/2$
($r\ge 1/2+\varepsilon/4$), which is stronger than $r\ge1/2$.
Finally, we combine the necessary conditions $r>1/2$ and $r\le 1/2
+2s$ to conclude that $s>0$ is also a necessary condition.

\subsection{Proof of Theorem \ref{opthmgen}}

The following counterexamples are directly adapted from those for
the 2d case in ~\cite{dfs2006}, and depend on a large, positive
parameter $L $ going to infinity. We choose  $A, B, C \subset \R^3$,
depending on $L$ and concentrated along the $\xi_1$-direction, with
the property
\begin{equation}\label{ABCproperty}
   \eta \in A, \,\, \xi \in C \implies \eta-\xi \in B.
\end{equation}
 Using these sets, we then construct $\psi$ and $\psi'$ depending
on $L$, such that
\begin{equation}\label{LowerBound}
  \frac{\norm{\innerprod{\beta P_{+}(D_x)
\psi}{P_{\pm}(D_x)\psi'}}_{H^{-a_3,-\alpha_3}}}
  {\norm{\psi}_{X_+^{a_1,\alpha_1}}\norm{\psi'}_{X_\pm^{a_2,\alpha_2}}}
  \gtrsim \frac{1}{L^\delta},
\end{equation} for some $\delta=\delta(a_1,a_2,a_3,\alpha_1,\alpha_2, \alpha_3)$. This inequality will lead to the necessary
condition $\delta\ge 0$.

 Let us take
the plus sign in \eqref{LowerBound} for the moment. Later,  we will
also use the minus sign. Assuming $A,B,C$ have been chosen, we set
\begin{align}
  \label{Psi1}
  \widetilde \psi(\lambda,\eta) &= \mathbf{1}_{\lambda+\eta_1 = O(1)}
\mathbf{1}_{\eta \in A} v_+(\eta),
  \\\label{Psi2}
  \widetilde \psi'(\lambda-\tau,\eta-\xi) &=
\mathbf{1}_{\lambda-\tau+\eta_1-\xi_1 = O(1)} \mathbf{1}_{\eta -\xi
\in B} v_+(\eta-\xi),
\end{align}
where  \begin{equation}\label{vPlus}
 v_+(\xi) = \bigl[ 1,  0,
\hat\xi_3,
 \hat\xi_1+i\hat\xi_2\bigr]^T
\end{equation}
is an eigenvector of $P_+(\xi)$, and $\hat\xi \equiv
 \frac{\xi}{\abs{\xi}}$. \\

Observe that
\begin{equation}\label{Eigenproduct}
  \innerprod{\beta v_{+}(\eta)}{v_+(\zeta)}
  = 1 - \hat\eta\cdot\hat\zeta + i\hat{\eta'}\wedge\hat{\zeta'},
  \end{equation}  \qquad \qquad where
  $\hat{\eta'}\wedge\hat{\zeta'}=\hat\eta_1\hat\zeta_2 -
  \hat\eta_2\hat\zeta_1 \ \ \text{and} \ \ \xi'=(\xi_1,\xi_2).$
Hence
\begin{equation}\label{Imagpart}
  \im\innerprod{\beta v_{+}(\eta)}{v_+(\eta-\xi)} = \pm\sin\theta_+ \sim
\pm\theta_+,
\end{equation}
where the sign in front of $\sin\theta_+$ depends on the orientation
of $(\eta',\eta'-\xi')$. But the sets $A,B,C$ will be chosen so that
the orientation of the pair $(\eta',\eta'-\xi')$ is fixed; hence we
conclude (see ~\cite{dfs2006}) that
\begin{multline}\label{Product3}
  \norm{\innerprod{\beta P_{+}(D) \psi}{P_+(D)\psi'}}_{H^{-a_3,-\alpha_3}}
\ge K^+,
  \\
  \text{where}
  \quad
  K^+=
    \norm{\int_{\R^{1+3}}
\frac{\theta_+}{\angles{\xi}^{a_3}\angles{\abs{\tau}-\abs{\xi}}^{\alpha_3}}
  \mathbf{1}_{\left\{
  \scriptstyle\eta \in A
  \atop
  \scriptstyle\lambda+\eta_1 = O(1)
  \right\}
  }
  \mathbf{1}_{\left\{
  \scriptstyle\xi \in C
  \atop
  \scriptstyle\tau+\xi_1 = O(1)
  \right\}
  }
  \, d\lambda \, d\eta}_{L^2_{\tau,\xi}}.
\end{multline}

We now construct the counterexamples, by choosing the sets $A,B,C$.
Note that in $K^+$,
\begin{equation}\label{Slabs}
\begin{gathered}
  \eta \in A, \qquad \xi \in C, \qquad \eta-\xi \in B,
  \\
  \lambda + \eta_1 = O(1),
  \qquad \tau + \xi_1 = O(1),
  \qquad \lambda-\tau + \eta_1-\xi_1 = O(1).
\end{gathered}
\end{equation}
\subsubsection{Necessity of \eqref{cond1}} We consider
high-high frequency interaction giving out put at high frequency.
Set
\begin{align*}
  A &= \left\{ \xi \in \R^3 : \abs{\xi_1 - L} \le L/4, \,\, \abs{\xi_2 -
L^{1/2}} \le L^{1/2}/4, \ \ \abs{\xi_3 - L^{1/2}} \le L^{1/2}/4
\right\},
  \\
  B &= \left\{ \xi \in \R^3 : \abs{\xi_1 - 2L} \le L/2, \,\, \abs{\xi_2}
\le L^{1/2}/2, \ \ \abs{\xi_3} \le L^{1/2}/2 \right\},
  \\
  C &= \left\{ \xi \in \R^3 : \abs{\xi_1 + L} \le L/4, \,\, \abs{\xi_2 -
L^{1/2}} \le L^{1/2}/4, \ \ \abs{\xi_2 - L^{1/2}} \le
L^{1/2}/4\right\}.
\end{align*}

Then \eqref{ABCproperty} holds. By \eqref{Slabs}, we have
$$
  \theta_+ = \vangle(\eta',\eta'-\xi') \sim \frac{1}{L^{1/2}},
  \qquad \abs{\xi}, \fixedabs{\eta}, \fixedabs{\eta-\xi} \sim L,
$$
and
\begin{equation}\label{NearCone1}
  \lambda+\fixedabs{\eta} = \lambda+\eta_1+\fixedabs{\eta}-\eta_1
  = \lambda+\eta_1 + \frac{\eta_2^2 + \eta_3^2}{\fixedabs{\eta}+\eta_1} =
O(1).
\end{equation}
Similarly,
\begin{equation}\label{NearCone2}
  \lambda-\tau+\fixedabs{\eta-\xi} = O(1),
  \qquad
  \bigabs{\abs{\tau}-\abs{\xi}} = \bigabs{\tau-\abs{\xi} }\le \tau+\xi_1=
O(1).
\end{equation}
Let $\abs{A}$ denotes the volume of $A$. Then
$$
  K^+ \sim \frac{\abs{A}\abs{C}^{1/2}}{L^{1/2+a_3}}
  \qquad \text{and} \qquad
  \norm{\psi}_{X_+^{a_1,\alpha_1}}
  \sim L^{a_1} \abs{A}^{1/2}, \quad  \norm{\psi'}_{X_+^{a_2,\alpha_2}}\sim
L^{a_2} \abs{B}^{1/2}
$$
Since $\abs{A} = \abs{C} \sim L^{2}$, we conclude that
\eqref{LowerBound} holds with
$\delta(a_1,a_2,a_3,\alpha_1,\alpha_2,\alpha_3) = a_1+a_2+a_3 -1/2$,
proving the necessity of $a_1+a_2+a_3 \ge 1/2$.

\subsubsection{Necessity of \eqref{cond2} and \eqref{cond3}}\label{HLH1}
We consider high-low frequency interaction with output at high
frequency.
\begin{align*}
  A &= \left\{ \xi \in \R^3 : \abs{\xi_1} \le L^{1/2}/2, \,\, \abs{\xi_2 -
1} \le L^{1/2}/2, \ \abs{\xi_3 - 1} \le L^{1/2}/2 \right\},
  \\
  B &= \left\{ \xi \in \R^3 : \abs{\xi_1 - L} \le L^{1/2}, \,\,
\abs{\xi_2} \le L^{1/2}, \ \abs{\xi_3} \le L^{1/2} \right\},
  \\
  C &= \left\{ \xi \in \R^3 : \abs{\xi_1 + L} \le L^{1/2}/2, \,\,
\abs{\xi_2 - 1} \le L^{1/2}/2, \ \abs{\xi_3 - 1} \le L^{1/2}/2
\right\}.
\end{align*}

Then $\theta_+ =\vangle(\eta',\eta'-\xi') \sim 1$, $\fixedabs{\eta}
\sim L^{1/2}$ and $\abs{\xi}, \fixedabs{\eta-\xi} \sim L$. Further,
\eqref{NearCone2} still holds, whereas the calculation in
\eqref{NearCone1} shows that $\lambda+\fixedabs{\eta} \sim L^{1/2}$,
since $\fixedabs{\eta}+\eta_1 \ge \eta_2 - \eta_1 \ge L^{1/2}/2$.
Thus,
$$
  K^+ \sim \frac{\abs{A}\abs{C}^{1/2}}{L^{a_3}},
  \qquad
  \norm{\psi}_{X_+^{a_1,\alpha_1}} \sim L^{{a_1}/2+{\alpha_1} /2}
\abs{A}^{1/2},
  \qquad \norm{\psi'}_{X_+^{a_2,\alpha_2}}
  \sim L^s \abs{B}^{1/2}.
$$
\\
But $\abs{A}, \abs{B}, \abs{C} \sim L^{3/2}$, hence
\eqref{LowerBound} holds with $\delta(a_1,a_2,a_3,\alpha_1,
\alpha_2, \alpha_3) =\frac{a_1+\alpha_1}2 + a_2+a_3-3/4$, proving
the necessity of \eqref{cond2}.
\\ \\
To show the necessity of \eqref{cond3}, we only need to modify $ A$
and $ B $ such that in $A$, we set $\abs{\xi_1 + L} \le L^{1/2}/2$
instead of $\abs{\xi_1} \le L^{1/2}/2 $, and in $B$ we set $
\abs{\xi_1 } \le L^{1/2}$ instead of $\abs{\xi_1 - L} \le
L^{1/2}/2$. Otherwise, the same argument as above shows the
necessity of \eqref{cond3}.

\subsubsection{Necessity of \eqref{cond4} and \eqref{cond5}}\label{HLH2}
The configuration is the same as in the previous subsection, except
that the squares $A,B,C$ now have side length $\sim 1$. We set
\begin{align*}
  A &= \left\{ \xi \in \R^3 : \abs{\xi_1} \le 1/2, \,\, \abs{\xi_2 - 1}
\le 1/2, \ \abs{\xi_3 - 1} \le 1/2 \right\},
  \\
  B &= \left\{ \xi \in \R^3 : \abs{\xi_1 - L} \le 1, \,\, \abs{\xi_2} \le
1, \ \abs{\xi_3 - 1} \le 1/2 \right\},
  \\
  C &= \left\{ \xi \in \R^3 : \abs{\xi_1 + L} \le 1/2, \,\, \abs{\xi_2 -
1} \le 1/2, \ \abs{\xi_3 - 1} \le 1/2 \right\}.
\end{align*}\\
Then $\theta_+ \sim 1$, $\fixedabs{\eta} \sim 1$, $\abs{\xi},
\fixedabs{\eta-\xi} \sim L$, and \eqref{NearCone1} holds. Since
\eqref{NearCone2} also holds, we conclude:
$$
  K^+ \sim \frac{\abs{A}\abs{C}^{1/2}}{L^{c}},
  \qquad
  \norm{\psi}_{X_+^{a,\alpha}} \sim \abs{A}^{1/2},
  \qquad \norm{\psi'}_{X_+^{b,\beta}}
  \sim L^b \abs{B}^{1/2}.
$$
But $\abs{A}, \abs{B}, \abs{C} \sim 1$, so \eqref{LowerBound} holds
with $\delta(a_1,a_2,a_3,\alpha_1,\alpha_2, \alpha_3) = a_1+a_2$,
proving necessity of \eqref{cond5}. By symmetry \eqref{cond4} is
also necessary.

\subsection{Necessity of \eqref{cond6}.} Here we consider high-high
frequency interaction with output at low frequency, and we choose
the minus sign in \eqref{LowerBound}.
\begin{align*}
  A &= \left\{ \xi \in \R^3 : \abs{\xi_1 - L} \le 1/4, \,\, \abs{\xi_2 -
1} \le 1/4, \ \abs{\xi_3 - 1} \le 1/4  \right\},
  \\
  B &= \left\{ \xi \in \R^3 : \abs{\xi_1 - L} \le 1/2, \,\, \abs{\xi_2}
\le 1/2, \ \abs{\xi_3} \le 1/2 \right\},
  \\
  C &= \left\{ \xi \in \R^3 : \abs{\xi_1} \le 1/4, \,\, \abs{\xi_2 - 1}
\le 1/4, \ \abs{\xi_3} \le 1/2 \right\}.
\end{align*}

We now restrict the integration to
$$
  \eta \in A, \qquad \lambda + \fixedabs{\eta} = O(1),
  \qquad
  \xi \in C, \qquad \tau + 2L = O(1),
$$
which implies
$$
  \eta-\xi \in B, \qquad \lambda-\tau - \fixedabs{\eta-\xi} =
\lambda+\fixedabs{\eta}-\tau-2L+L-\fixedabs{\eta}+L-\fixedabs{\eta-\xi}=
O(1),
$$
since $L-\fixedabs{\eta} = L-\eta_1-(\eta_2^2 +
\eta_3^2)/(\fixedabs{\eta}+\eta_1) = O(1)$ and, similarly,
$L-\fixedabs{\eta-\xi} = O(1)$. Now set
\begin{align*}
  \widetilde \psi(\lambda,\eta) &= \mathbf{1}_{\lambda+\fixedabs{\eta} =
O(1)} \mathbf{1}_{\eta \in A} v_+(\eta),
  \\
  \widetilde \psi'(\lambda-\tau,\eta-\xi) &=
\mathbf{1}_{\lambda-\tau-\fixedabs{\eta-\xi} = O(1)}
\mathbf{1}_{\eta -\xi \in B} v_-(\eta-\xi),
\end{align*}
where $v_-(\xi) = v_+(-\xi)$ and $v_+(\xi)$ is given by
\eqref{vPlus}. Thus, $v_-(\xi)$ is an eigenvector of $P_-(\xi) =
P_+(-\xi)$. Since $
  \theta_- = \vangle(\eta',\xi'-\eta') \sim 1,
$ we then get, arguing as in \eqref{Product3}, and using
\eqref{Eigenproduct},
\begin{multline*}
  \norm{\innerprod{\beta P_{+}(D) \psi}{P_-(D)\psi'}}_{H^{-a_3,-\alpha_3}}
\ge K^-,
  \\
  \text{where}
  \quad
  K^-=
    \norm{\int_{\R^{1+3}}
\frac{1}{\angles{\xi}^{a_3}\angles{\abs{\tau}-\abs{\xi}}^{\alpha_3}}
  \mathbf{1}_{\left\{
  \scriptstyle\eta \in A
  \atop
  \scriptstyle\lambda+\fixedabs{\eta} = O(1)
  \right\}
  }
  \mathbf{1}_{\left\{
  \scriptstyle\xi \in C
  \atop
  \scriptstyle\tau+2L = O(1)
  \right\}
  }
  \, d\lambda \, d\eta}_{L^2_{\tau,\xi}}.
\end{multline*}
Since $\abs{\xi} \sim 1$, $\fixedabs{\eta}, \fixedabs{\eta-\xi} \sim
L$ and $\abs{\tau}-\abs{\xi} \sim \abs{\tau} \sim L$, we see that
$$
  K^- \sim \frac{\abs{A} \abs{C}^{1/2}}{L^{\alpha_3}},
  \qquad
  \norm{\psi}_{X_+^{a_1,\alpha_1}} \sim L^{a_1} \abs{A}^{1/2},
  \qquad \norm{\psi'}_{X_-^{b,\beta}}
  \sim L^{a_2} \abs{B}^{1/2}.
$$
But $\abs{A}, \abs{B}, \abs{C} \sim 1$, hence \eqref{LowerBound}
holds with $\delta(a_1,a_2,a_3,\alpha_1, \alpha_2, \alpha_3) =
a_1+a_2+ \alpha_3$, proving necessity of \eqref{cond6} .

Department of Mathematical Sciences, Norwegian University of Science
and Technology, Alfred Getz' vei 1, N-7491 Trondheim, Norway\\
E-mail address: tesfahun@math.ntnu.no

\end{document}